\newtheorem{theorem}{Theorem}[section]
\newtheorem{corollary}{Corollary}
\newtheorem{lemma}[theorem]{Lemma}
\newtheorem{proposition}{Proposition}
\theoremstyle{definition}
\newtheorem{remark}{Remark}
\title[Monotonicity of principal eigenvalue] %Use the shortened version of the full title
      {Monotonicity of principal eigenvalue for elliptic operators with incompressible flow: A functional approach}
\author[SHUANG LIU, YUAN LOU]{}
\subjclass[2010]{Primary: 35P15}
 \keywords{Principal eigenvalue, monotonicity, incompressible flow,
 min-max characterization.}
 \email{liushuangnqkg@ruc.edu.cn; lou@math.ohio-state.edu}
\begin{document}
\maketitle

% Enter the first author's name and address:
\centerline{\scshape Shuang Liu }
\medskip
{\footnotesize
% please put the address of the first author
 \centerline{Institute for Mathematical Sciences, Renmin University of China}
   \centerline{Beijing 100872, PR China}
} % Do not forget to end the {\footnotesize by the sign }

\bigskip

\centerline{\scshape Yuan Lou}
\medskip
{\footnotesize
 % please put the address of the second  and third author
\centerline{Institute for Mathematical Sciences, Renmin University of China}
\centerline{Beijing 100872, PR China;}
   \centerline{Department of Mathematics, Ohio State University}
   \centerline{ Columbus, OH 43210, USA}
}

\bigskip

% The name of the associate editor will be entered by an editorial staff
% "Communicated by the associate editor name" is not needed for special issue.
% \centerline{(Communicated by the associate editor name)}

%The abstract of your paper
\begin{abstract}
We establish the monotonicity 
of the principal eigenvalue $\lambda_1(A)$,  as a function of the advection amplitude $A$,
 for the elliptic operator $L_{A}=-\mathrm{div}(a(x)\nabla )+A\mathbf{V}\cdot\nabla +c(x)$ with
   incompressible flow $\mathbf{V}$,
    subject to Dirichlet, Robin and Neumann boundary conditions.
    As a consequence, the limit of $\lambda_1(A)$ as $A\to \infty$
    always exists and is finite for  Robin boundary conditions. These
    results  answer some open questions raised by Berestycki, Hamel and Nadirashvili \cite{nk2}.
    Our method relies upon some  functional which is associated with principal eigenfuntions
    for operator  $L_A$  and its adjoint operator.
    As a byproduct of the approach,  a new min-max characterization of $\lambda_1(A)$ is given.
\end{abstract}

%The title of your section 1
\section{\bf Introduction}\label{S1}

There have been extensive studies on the reaction-diffusion equations
of the form
\begin{equation}\label{Intro1}
    w_t=\mathrm{div}(a(x)\nabla w)-A\mathbf{V}\cdot\nabla w+wf(x,w),
\end{equation}
which model various physical, chemical, and biological processes:
On unbounded domains \cite{nk7,k0}, compact manifolds \cite{k1},
and bounded domains with appropriate boundary conditions \cite{nk0,nk2, CC2003, Ni2011}. Let $\Omega$ be a bounded region of $\mathbb{R}^{N}$ with smooth
boundary $\partial\Omega$, and $\mathbf{n}(x)$ be the outward unit normal vector at $x\in\partial\Omega$.  Consider equation (\ref{Intro1})
defined on $\Omega$ and suppose that $w$ satisfies $bw+(1-b)[a(x)\nabla w] \cdot\mathbf{n}=0$ on $\partial\Omega$.
%If $f(x,0)=0$, then $w\equiv0$ is a steady state. %of equation (\ref{Intro1}).
 The stability of steady state $w\equiv0$ is determined  by the sign of the principal eigenvalue, denoted as $\lambda_1(A)$, for the
 linear eigenvalue problem
$$
L_Au:=-\mathrm{div}(a(x)\nabla u)+A\mathbf{V}\cdot\nabla u+c(x)u=\lambda_1(A)u,
$$
subject to boundary conditions $bu+(1-b)[a(x)\nabla u] \cdot\mathbf{n}=0$ on $\partial\Omega$, where $c(x)=-f(x,0)$,
 and parameter $b\in [0, 1]$.

%Actually carrying this issue, in turn, rests on some interesting eigenvalue problems.
Of particular interest is  the dependence of the principal eigenvalue $\lambda_1(A)$ on the advection amplitude $A$.
If vector field  $\mathbf{V}$ is  incompressible, i.e.,  $\mathrm{div}\mathbf{V}=0$ in $\Omega$,
  Berestycki et al.
  investigated in \cite{nk2} the asymptotic behavior of  $\lambda_1(A)$ as $A$ approaches infinity,  and
  they identified a direct link between the limit of $\lambda_1(A)$ and the first integral set of $\mathbf{V}$,
   defined as
%$$
%\mathcal{I}=\Big\{\varphi \in  H^{1}(\Omega):\varphi\neq0, \mathbf{V}\cdot\nabla \varphi=0~\mathrm{almost}~\mathrm{everywhere}~\mathrm{in}~\Omega\Big\}.
%$$
\begin{equation*}
\mathcal{I}_{b}=
 \begin{cases}
 \begin{split}
 &\{\varphi \in  H^{1}(\Omega):\varphi\neq0, \mathbf{V}\cdot\nabla \varphi=0~\mathrm{a.e.}~\mathrm{in}~\Omega\}, \quad ~~\mathrm{}~0\leq b<1\\
 &\{\varphi \in  H_{0}^{1}(\Omega):\varphi\neq0, \mathbf{V}\cdot\nabla \varphi=0~\mathrm{a.e.}~\mathrm{in}~\Omega\}, \quad  ~~~~~~\mathrm{}~b=1.
  \end{split}
 \end{cases}
 \end{equation*}
More precisely, Berestycki et al. showed  in \cite{nk2}
 that for the operator $L_A$ defined on $\Omega$  with  Dirichlet ($b=1$) or Neumann ($ b=0$) boundary conditions,
 $\lambda_1(A)$ stays bounded as $A\rightarrow+\infty$ if and only if  $\mathcal{I}_1\neq\varnothing$ or $\mathcal{I}_0\neq\varnothing$, respectively.
 Furthermore, they proved that for any $A\ge 0$,
\begin{equation}\label{Liu4}
 \begin{split}
 \lambda_1(0)\le \lambda_1(A)\le \lim_{A\rightarrow+\infty}\lambda_1(A)=\inf_{\omega\in\mathcal{I}_{0}~\mathrm{or}~\mathcal{I}_{1}}\frac{\int_{\Omega}\nabla\omega\cdot[a(x)\nabla \omega]\mathrm{d}x+\int_{\Omega}c(x)\omega^2\mathrm{d}x}{\int_{\Omega}\omega^2\mathrm{d}x}.
  \end{split}
 \end{equation}
 That is, $\lambda_1(A)$ attains its minimum at $A=0$ and  its maximum at $A=\infty$.
 As mentioned in \cite{nk2}, $\lambda_1(A)$ is a nondecreasing function of $|A|$ if  $\mathbf{V}$ is
  an incompressible gradient flow.
   Nevertheless, this monotonicity property has remained  open for a general incompressible flow $\mathbf{V}$.
   %Aside from this, working along the line of \cite{nk2}, the necessary and sufficient condition for the boundedness of $\lambda_1(A)$ with Robin boundary condition has also remained unanswered, see Section 5 in \cite{nk2} for a more detailed discussion.

   The primary goal of this paper is to answer the above open question affirmatively.
To this end, we shall focus on the following eigenvalue problem with a general incompressible flow $\mathbf{V}$,
 subject to general boundary conditions:
\begin{equation}\label{Liu1}
 \begin{cases}
 \begin{split}
 &L_A u_A=-\mathrm{div}(a(x)\nabla u_A)+A\mathbf{V}\cdot\nabla u_A+c(x)u_A=\lambda_1(A)u_A~~\ \ \mathrm{in}~\Omega,\\
 &u_A>0~~\ \ \mathrm{in}~\Omega,\\
  &bu_A+(1-b)[a(x)\nabla u_A] \cdot\mathbf{n}=0~~\ \ \mathrm{on}~\partial\Omega.
  \end{split}
 \end{cases}
 \end{equation}
%where $\Omega$ is assumed to be a bounded region of $\mathbb{R}^{N}$ with smooth
%boundary $\partial\Omega$, and $\mathbf{n}(x)$ is the outward unit normal vector at $x\in\partial\Omega$, and parameter $b\in [0, 1]$.

Throughout this paper we always assume that $c\in C^{\alpha}(\bar{\Omega})$ and the diffusion matrix $a(x)$ is symmetric and uniformly  elliptic $C^{1,\alpha}(\bar{\Omega})$ matrix field
satisfying
$$\exists \ 0<\gamma_1<\gamma_2,~\mathrm{such}~\mathrm{that}~\gamma_1|\xi|^{2}\leq\xi^{\mathrm{T}}a(x)\xi\leq\gamma_2|\xi|^{2},\forall x\in\Omega, \forall \xi\in\mathbb{R}^{N},
$$
for some constant $\alpha\in(0,1)$. Furthermore, we always assume that
the vector field $\mathbf{V}\in C^{1}(\bar{\Omega})$ satisfying $\mathrm{div}\mathbf{V}=0$ in $\Omega$,
whereas an additional assumption stating that $\mathbf{V}\cdot \mathbf{n}=0~\mathrm{on}~\partial\Omega$
is always assumed for the case of $0\leq b<1$.
 %see our Remark \ref{R2} below and the same argument for $b=0$ in \cite{nk2}.
Under these assumptions the Krein-Rutman Theorem  guarantees the existence of the principle eigenvalue $\lambda_1(A)$
 and it can be easily shown that $\lambda_1(A)$
  is  symmetric in $A$. Therefore, throughout this paper we shall assume $A\geq0$.

%Now we are ready to present our main results and give some of their implications. The first result in present paper, answering the previous monotonicity question in \cite{nk2} affirmatively,
Our first result can be stated as follows.
\begin{theorem}\label{th3}
Let  $L_A$ be the elliptic operator defined by {\rm(\ref{Liu1})}
and $\lambda_1(A)$ be its principle eigenvalue.   Then
the following statements hold:

\medskip
%$\frac{\partial\lambda_1}{\partial A}(0)=0$ and for $A>0$, either \\
\noindent{$~~~~\mathrm{(i)}$}  If $ u_0\not\in\mathcal{I}_{b}$,
then $\frac{\partial\lambda_1}{\partial A}(A)>0$ for every $A>0$;

\smallskip

\noindent{$\mathrm{(ii)}$} If $ u_0\in\mathcal{I}_{b}$, then $\lambda_1(A)\equiv\lambda_1(0)$ for every $A>0$.
%if and only if $ u_0\in\mathcal{I}_{b}$,\\ where

\medskip

\noindent{} Here $u_0$ is the principal eigenfunction of $L_0$ satisfying
\begin{equation*}
 \begin{cases}
 \begin{split}
 &-\mathrm{div}(a(x)\nabla u_0)+c(x)u_0=\lambda_1(0)u_0~~\ \ \mathrm{in}~\Omega,\\
 &u_0>0~~\ \ \mathrm{in}~\Omega,\\
  &bu_0+(1-b)[a(x)\nabla u_0]\cdot\mathbf{n}=0~~\  \ \mathrm{on}~\partial\Omega.
  \end{split}
 \end{cases}
 \end{equation*}
\end{theorem}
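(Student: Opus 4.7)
\emph{Plan.} The strategy is to build a min--max characterisation of $\lambda_1(A)$ in terms of a functional that couples the primal eigenfunction $u_A$ and the adjoint eigenfunction $u_A^*$, and then read off the monotonicity as an immediate corollary. Since $\mathrm{div}\,\mathbf V=0$ in $\Omega$ and $\mathbf V\cdot\mathbf n=0$ on $\partial\Omega$ (or the Dirichlet condition holds), $u_A^*$ satisfies the same boundary condition as $u_A$ and coincides (up to normalisation) with the principal eigenfunction of $L_{-A}$. For any pair of positive trial functions $u,v$ obeying that boundary condition, write $u=\psi e^{s}$ and $v=\psi e^{-s}$, so that $\psi=\sqrt{uv}$ and $s=\tfrac12\log(u/v)$; the pair $(\psi,s)$ parametrises all such $(u,v)$. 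A direct computation, using the symmetry of $a$ and incompressibility of $\mathbf V$ to discard the divergence terms, transforms the symmetrised quotient $\tfrac12(\int vL_Au+\int uL_A^*v)/\int uv$ into
\begin{equation*}
F(A,\psi,s)\;:=\;\frac{\int a\nabla\psi\cdot\nabla\psi\,+\,\int c\psi^2\,+\,(\mathrm{BT})\,-\,\int \psi^2\,\nabla s\cdot a\nabla s\,+\,A\int \psi^2\,\mathbf V\cdot\nabla s}{\int\psi^2},
\end{equation*}
where $(\mathrm{BT})=\tfrac{b}{1-b}\int_{\partial\Omega}\psi^2$ when $0\le b<1$ and vanishes when $b=1$. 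The choice $\psi_A=\sqrt{u_Au_A^*}$, $s_A=\tfrac12\log(u_A/u_A^*)$ then gives $F(A,\psi_A,s_A)=\lambda_1(A)$.

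\emph{The min--max identity.} I will then establish
\begin{equation*}
\lambda_1(A)\;=\;\min_{\psi>0}\;\max_{s}\;F(A,\psi,s).
\end{equation*}
The ``$\le$'' direction uses concavity of $F$ in $s$: subtracting $L_Au_A=\lambda_1(A)u_A$ and $L_A^*u_A^*=\lambda_1(A)u_A^*$ after the $(\psi,s)$ substitution shows that $s_A$ solves the Euler--Lagrange equation $\mathrm{div}(\psi_A^2\,a\nabla s_A)=\tfrac{A}{2}\mathrm{div}(\psi_A^2\mathbf V)$, so $s_A$ is the maximiser in $s$ at $\psi=\psi_A$, and hence $\min_\psi\max_s F\le\max_s F(A,\psi_A,s)=\lambda_1(A)$. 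For ``$\ge$'' I invoke weak duality together with the auxiliary self-adjoint operator $M_s:=-\mathrm{div}(a\nabla\,\cdot\,)+[c-\nabla s\cdot a\nabla s+A\,\mathbf V\cdot\nabla s]$, equipped with the same boundary condition as $L_A$; its principal eigenvalue $\ell(s)$ equals $\min_\psi F(A,\psi,s)$. Adding the primal and adjoint equations after the substitution gives $M_{s_A}\psi_A=\lambda_1(A)\psi_A$, so positivity of $\psi_A$ identifies it as the principal eigenfunction of $M_{s_A}$ and forces $\ell(s_A)=\lambda_1(A)$; therefore $\min_\psi\max_s F\ge\max_s\min_\psi F=\max_s\ell(s)\ge\lambda_1(A)$. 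A final rescaling $s=A\tau$ in the inner maximisation isolates the $A$-dependence:
\begin{equation*}
\lambda_1(A)\;=\;\min_{\psi>0}\left[J_*(\psi)+\frac{A^2\,Q(\psi)}{\int\psi^2}\right],\qquad Q(\psi)\,:=\,\sup_{\tau}\int\psi^2\bigl[-\nabla\tau\cdot a\nabla\tau+\mathbf V\cdot\nabla\tau\bigr]\;\ge\;0,
\end{equation*}
where $J_*$ is the self-adjoint Rayleigh quotient whose infimum is $\lambda_1(0)$.

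\emph{Monotonicity and the two cases.} Monotonicity is now immediate: the bracketed functional is pointwise (in $\psi$) nondecreasing in $A$, so $A\mapsto\lambda_1(A)$ is nondecreasing on $[0,\infty)$. The envelope theorem applied at the minimiser yields $\partial_A\lambda_1(A)=2A\,Q(\psi_A)/\int\psi_A^2$, and $Q(\psi)=0$ is easily seen to be equivalent to $\mathbf V\cdot\nabla\psi=0$ (the vanishing of the first-order term in $\tau$ for all $\tau$ forces $\mathrm{div}(\psi^2\mathbf V)=0$). If $Q(\psi_A)=0$ for some $A>0$, the Euler--Lagrange equation for $s_A$ collapses to $\mathrm{div}(\psi_A^2\,a\nabla s_A)=0$; a weighted energy estimate---whose boundary contribution vanishes by the natural Neumann condition on $s_A$ or by $\psi_A|_{\partial\Omega}=0$ in the Dirichlet case---forces $s_A$ constant, so $u_A=u_A^*$. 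Adding and subtracting the primal and adjoint equations then give $L_0u_A=\lambda_1(A)u_A$ and $\mathbf V\cdot\nabla u_A=0$; uniqueness of the principal eigenfunction makes $u_A$ a scalar multiple of $u_0$, hence $u_0\in\mathcal I_b$, contradicting the hypothesis of (i). For (ii), $u_0\in\mathcal I_b$ makes $u_0$ itself a feasible trial with $Q(u_0)=0$, so $\lambda_1(A)\le J_*(u_0)=\lambda_1(0)$, which combined with the lower bound in~\eqref{Liu4} yields equality. The principal obstacle will be the rigorous justification of the min--max identity, especially the ``$\ge$'' direction and the envelope calculation, which require a careful choice of function spaces for $\psi$ and $s$ together with regularity of $s_A$ up to $\partial\Omega$ (via the Hopf lemma in the Dirichlet case, which ensures $u_A/u_A^*$ extends regularly to the boundary).
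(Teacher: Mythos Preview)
Your approach is correct and genuinely different in packaging from the paper's, although both rest on the same underlying algebraic identity.  The paper introduces the functional $J(\omega)=\int_\Omega u_Av_A\,(L_A\omega/\omega)$, proves the key identity
\[
J(u_A)-J(\omega)=\int_\Omega u_Av_A\,\bigl[\nabla\log(\omega/u_A)\bigr]\!\cdot\!\bigl[a\nabla\log(\omega/u_A)\bigr]
\]
(Lemma~\ref{L2}), and then differentiates the eigenvalue problem to obtain
\[
\partial_A\lambda_1(A)=\int_\Omega v_A\,\mathbf V\!\cdot\!\nabla u_A
=\tfrac12\int_\Omega u_Av_A\,\bigl[\nabla\log(v_A/u_A)\bigr]\!\cdot\!\bigl[a\nabla\log(v_A/u_A)\bigr]\ge 0,
\]
with equality forcing $u_A=v_A$.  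To upgrade this to \emph{strict} positivity under~(i), the paper then computes $\partial_A^2\lambda_1(0)$ and uses the variational characterisation of $\lambda_1(0)$ to reach a contradiction.  Your route instead organises the same identity into the saddle functional $F(A,\psi,s)$ via the substitution $\psi=\sqrt{uv}$, $s=\tfrac12\log(u/v)$, and extracts the striking representation $\lambda_1(A)=\min_\psi\bigl[J_*(\psi)+A^2Q(\psi)/\!\int\psi^2\bigr]$ with $Q\ge 0$.  This makes the monotonicity (and even convexity) in $A$ transparent without any differentiation of the eigenvalue, and your treatment of the equality case---$Q(\psi_A)=0\Rightarrow s_A$ constant $\Rightarrow u_A=v_A\Rightarrow\mathbf V\cdot\nabla u_A=0\Rightarrow u_A\propto u_0$---is more direct than the paper's second-derivative argument.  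What the paper's approach buys is that it stays at the level of classical eigenfunctions and avoids having to set up and justify a min--max over function spaces; what your approach buys is a cleaner structural picture and the explicit $A^2$-dependence, at the cost of the regularity and duality verifications you flag at the end (these are routine in the Robin case since $\psi_A>0$ on $\bar\Omega$, and in the Dirichlet case the Hopf lemma gives $u_A/v_A$ bounded above and below so $s_A$ extends to $C^1(\bar\Omega)$ and $\psi_A$ vanishes to first order on $\partial\Omega$).
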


Theorem \ref{th3} implies that the strict monotonicity of  $\lambda_1(A)$ with respect to
 the advection amplitude $A$ relies on $u_0$, the principal eigenfunction of
 operator $L_0$. Interpreting this in the context of convection-enhanced diffusion, Theorem \ref{th3}
  suggests that larger advection amplitude generally produces faster mixing for reaction-diffusion-advection
   equation (\ref{Intro1}) as long as $ u_0\not\in\mathcal{I}_{b}$.
    In this sense, Theorem \ref{th3}
      seems to refine the well-known statement that mixing by an incompressible flow enhances diffusion
      in various contexts \cite{k1, nk7,H1, HN1, nk4,kiselev, nk10, k0,k011}.

 Our next result, as a corollary of Theorem \ref{th3}, provides the boundedness and
asymptotic behavior of $\lambda_1(A)$ for Robin boundary conditions, consistent with the main result in  \cite{nk2} for Neumann boundary
conditions. %More precisely, we have
\begin{theorem}\label{Rth2}
If $0\leq b<1$,
 the limit  $\lim_{A\to+\infty}\lambda_1(A)$ always exists, is finite and satisfies
\begin{equation}\label{Liu2}
 \begin{split}
 \lim_{A\rightarrow+\infty}\lambda_1(A)\leq\inf_{\omega\in\mathcal{I}_{b}}\frac{\frac{b}{1-b}\int_{\partial\Omega}\omega^2\mathrm{d}S_x+\int_{\Omega}\nabla\omega\cdot[a(x)\nabla \omega]\mathrm{d}x+\int_{\Omega}c(x)\omega^2\mathrm{d}x}{\int_{\Omega}\omega^2\mathrm{d}x}.
  \end{split}
 \end{equation}
In particular, the principal eigenvalues $\lambda_1(A)$  of {\rm(\ref{Liu1})} are uniformly bounded.
\end{theorem}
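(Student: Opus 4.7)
The plan is to combine the monotonicity assertion of Theorem \ref{th3} with a direct test-function computation. Theorem \ref{th3} guarantees that $A\mapsto\lambda_1(A)$ is nondecreasing on $[0,\infty)$, so the limit $\lim_{A\to+\infty}\lambda_1(A)$ exists in $(-\infty,+\infty]$. Because $\mathcal{I}_b$ contains the nonzero constant functions whenever $0\le b<1$, the infimum on the right-hand side of (\ref{Liu2}) is already finite; hence it suffices to prove the estimate (\ref{Liu2}) uniformly in $A\ge 0$, which will simultaneously yield the finiteness of the limit and the claimed upper bound.

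To establish the uniform bound, I would fix $\omega\in\mathcal{I}_b$, multiply the eigenvalue equation $L_A u_A=\lambda_1(A)u_A$ by $\omega^2/u_A$, and integrate over $\Omega$. This is permissible because $u_A>0$ on $\overline{\Omega}$ by Hopf's lemma under Robin conditions with $0\le b<1$. The term arising from $c(x)u_A$ gives $\int_\Omega c\,\omega^2\,dx$ immediately. For the diffusion term $-\int_\Omega(\omega^2/u_A)\,\mathrm{div}(a\nabla u_A)\,dx$, one integration by parts together with the Robin identity $(a\nabla u_A)\cdot\mathbf{n}=-\tfrac{b}{1-b}u_A$ produces the boundary contribution $\tfrac{b}{1-b}\int_{\partial\Omega}\omega^2\,dS_x$ and the bulk integral $\int_\Omega\nabla(\omega^2/u_A)\cdot a\nabla u_A\,dx$. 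Completing the square with respect to the positive-definite form $a(x)$ then yields the pointwise bound
\[
\nabla(\omega^2/u_A)\cdot a\nabla u_A = \nabla\omega\cdot a\nabla\omega - \bigl(\nabla\omega-(\omega/u_A)\nabla u_A\bigr)\cdot a\bigl(\nabla\omega-(\omega/u_A)\nabla u_A\bigr) \le \nabla\omega\cdot a\nabla\omega.
\]

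Next I would handle the advective contribution $A\int_\Omega(\omega^2/u_A)\,\mathbf{V}\cdot\nabla u_A\,dx = A\int_\Omega\omega^2\,\mathbf{V}\cdot\nabla\log u_A\,dx$ by integration by parts. The key observation is that $\omega\in\mathcal{I}_b$ implies $\mathbf{V}\cdot\nabla\omega=0$, which together with $\mathrm{div}\mathbf{V}=0$ gives $\mathrm{div}(\omega^2\mathbf{V})=0$; combined with $\mathbf{V}\cdot\mathbf{n}=0$ on $\partial\Omega$, this forces the advective term to vanish identically for every $A$. Assembling the three pieces yields
\[
\lambda_1(A)\int_\Omega\omega^2\,dx \;\le\; \frac{b}{1-b}\int_{\partial\Omega}\omega^2\,dS_x + \int_\Omega\nabla\omega\cdot a\nabla\omega\,dx + \int_\Omega c\,\omega^2\,dx,
\]
and taking the infimum over $\omega\in\mathcal{I}_b$ closes the argument.

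I do not anticipate any serious obstacle: once Theorem \ref{th3} is in hand, the derivation is essentially algebraic, and the only technical care needed is to justify the integration by parts involving $\log u_A$, which is routine given the strict positivity of $u_A$ on $\overline{\Omega}$ and the $C^{1,\alpha}$ regularity of the coefficients. The central conceptual input is the choice of the test function $\omega^2/u_A$ — tailored precisely so that the advective term collapses into a pure divergence — while the monotonicity furnished by Theorem \ref{th3} is what promotes the uniform-in-$A$ upper bound to a statement about the full limit.
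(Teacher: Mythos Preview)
Your proposal is correct and follows essentially the same route as the paper: the paper also invokes the monotonicity from Theorem~\ref{th3} to secure existence of the limit, then multiplies the equation for $u_A$ by $\omega^2/u_A$ for $\omega\in\mathcal{I}_b$, integrates by parts, and uses precisely the two observations you record (vanishing of the advective integral and the pointwise diffusion inequality) to obtain the uniform upper bound. If anything, you supply more detail than the paper, which attributes the key integral estimates to the analogous computation in \cite{nk2}; the only point you might mention explicitly is that $\omega\in\mathcal{I}_b$ is merely $H^1$, but since $u_A\in C^{2,\alpha}(\overline{\Omega})$ is bounded away from zero, the manipulations remain valid at that regularity.
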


%It is illuminating to consider the special case of $b=0$, then the  variational principle (\ref{Liu2}) in Theorem \ref{Rth2} reduces to Theorem 2.2 in \cite{nk2}.
The proof of the boundedness for $\lambda_1(A)$  in Theorem \ref{Rth2}
 is essentially due to  Berestycki et al. \cite{nk2}. % so is  (\ref{Liu2}).
 Nevertheless, the existence of the limit $\lim_{A\to\infty}\lambda_1(A)$ for Robin boundary
 conditions appears to be new.

The proof of Theorem \ref{th3} relies heavily  on properties of  certain functional. %To elaborate the matter, it is convenient to introduce the elliptic operator
 Set $L:=-\mathrm{div}(a(x)\nabla )+\mathbf{V}\cdot\nabla +c(x)$,
   with adjoint operator  $L^{*}:=-\mathrm{div}(a(x)\nabla )-\mathbf{V}\cdot\nabla +c(x)$,
    in view of $\mathrm{div}\mathbf{V}=0$ in $\Omega$
     and particularly $\mathbf{V}\cdot \mathbf{n}=0~\mathrm{on}~\partial\Omega$ for case $0\le b<1$.
     By $u,v$ we further denote the normalized principal eigenfunctions corresponding to $L$ and $L^{*}$,
     respectively. %
      In terms of operator $L$ and $u$, $v$, we now introduce  functional $J$,
\begin{equation*}
    J(\omega)=\int_{\Omega}u v\left(\frac{L\omega}{\omega}\right)\mathrm{d}x,
\end{equation*}
which is well defined on the cone
\begin{equation*}
\mathbb{S}_b=
 \begin{cases}
 \begin{split}
 &\{\varphi \in  C^{2}(\Omega)\cap C^1(\bar{\Omega}):\varphi >0 ~\mathrm{in}~\Omega ~\mathrm{,}~~ b\varphi +(1-b)[a(x)\nabla \varphi] \cdot\mathbf{n}=0~~\mathrm{on}~\partial\Omega\},~~\mathrm{for}~0\leq b<1\\
 &\{\varphi \in  C^{2}(\Omega)\cap C^1(\bar{\Omega}):\varphi >0 ~\mathrm{in}~\Omega, ~\varphi=0~\mathrm{on}~\partial\Omega~~\mathrm{,}~~ \nabla\varphi\cdot\mathbf{n}<0~\mathrm{on}~\partial\Omega\},~~~~~~\mathrm{for}~b=1.
  \end{split}
 \end{cases}
 \end{equation*}

 A direct observation from the definition of  functional $J$  leads to $J(u)=\lambda_1$ and a
  far less obvious result (see Lemma \ref{L2}) says
   that functional $J$ attains its maximum at the principal eigenfunction
  $u$ and its scalar multiples. This is crucial to the proof of Theorem \ref{th3} and it also allows us to explore a
   new min-max characterization of the principal eigenvalue.

The characterization of  the principal eigenvalue  has always been an interesting and active topic,
and we refer to Donsker and Varadhan, Nussbaum and Pinchover for some earlier works
  \cite{nk6,nk8,nk20}.
Employing the maximum principle,  Protter and Weinberger \cite{nk21} established a classical characterization
of the principal eigenvalue for
general second order elliptic operators $P$,  given by the min-max formula
\begin{equation}\label{Intro3}
    \lambda_1=\sup_{\omega>0}\inf_{x\in\Omega}\left[\frac{P\omega(x)}{\omega(x)}\right].
\end{equation}
This characterization is valid for general
elliptic operators in both bounded and unbounded domains \cite{nk20,nk21}.
As a byproduct of properties of functional $J$, we have the following characterization for $\lambda_1$:
\begin{theorem}\label{th4}For elliptic operator $L$ with an incompressible flow $\mathbf{V}$ subject to
general boundary conditions with $0\leq b\leq1$, the principal eigenvalue $\lambda_1$ can be characterized as
\begin{equation}\label{Liu14}
\begin{split}
  \lambda_1=\inf_{p\in \mathbb{S}_b,\int_{\Omega}p^2=1}\sup_{\omega\in\mathbb{S}_b}\int_{\Omega}p^2(x)\left(\frac{L\omega}{\omega}\right)\mathrm{d}x.
\end{split}
\end{equation}
\end{theorem}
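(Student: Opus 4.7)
The plan is to establish equality in (\ref{Liu14}) by proving the two opposite inequalities, fixing throughout the biorthogonal normalization $\int_\Omega uv\,\mathrm{d}x = 1$ so that $J(u)=\lambda_1$, and exploiting Lemma \ref{L2}, which asserts that $J$ attains its supremum over $\mathbb{S}_b$ at the principal eigenfunction $u$.

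For the direction $\lambda_1 \leq \inf_{p}\sup_{\omega}\int_\Omega p^2(x)(L\omega/\omega)\,\mathrm{d}x$, the key observation is that $u$ itself lies in $\mathbb{S}_b$: positivity and the Robin or Neumann boundary condition are built into the definition of $u$, while in the Dirichlet case $b=1$ the requirement $\nabla u\cdot\mathbf{n}<0$ on $\partial\Omega$ is supplied by the Hopf lemma. Choosing $\omega=u$ in the inner supremum and using $Lu/u\equiv\lambda_1$ gives, for every admissible $p$ with $\int_\Omega p^2\,\mathrm{d}x = 1$,
\begin{equation*}
\sup_{\omega\in\mathbb{S}_b}\int_\Omega p^2(x)\frac{L\omega}{\omega}\,\mathrm{d}x \;\geq\; \int_\Omega p^2(x)\,\lambda_1\,\mathrm{d}x \;=\; \lambda_1,
\end{equation*}
and the direction follows on taking the infimum over $p$.

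For the reverse inequality, I would exhibit the minimizer $p^{\ast} = \sqrt{uv}$, chosen so that $(p^{\ast})^{2} = uv$, $\int_\Omega (p^{\ast})^{2}\,\mathrm{d}x = 1$, and
\begin{equation*}
\int_\Omega (p^{\ast})^{2}(x)\frac{L\omega}{\omega}\,\mathrm{d}x \;=\; J(\omega) \qquad \text{for every } \omega\in\mathbb{S}_b.
\end{equation*}
By Lemma \ref{L2}, the inner supremum then equals $\lambda_1$, hence $\inf_{p}\sup_{\omega}(\cdot)\le\lambda_1$. What remains is to verify that $p^{\ast}\in\mathbb{S}_b$. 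For $0\leq b<1$, a one-line computation using $[a(x)\nabla u]\cdot\mathbf{n}=-\tfrac{b}{1-b}u$ and the analogous identity for $v$ yields
\begin{equation*}
[a(x)\nabla\sqrt{uv}\,]\cdot\mathbf{n} \;=\; \frac{v\,[a(x)\nabla u]\cdot\mathbf{n} + u\,[a(x)\nabla v]\cdot\mathbf{n}}{2\sqrt{uv}} \;=\; -\frac{b}{1-b}\sqrt{uv} \quad \text{on } \partial\Omega,
\end{equation*}
so $\sqrt{uv}$ satisfies the same Robin condition. For $b=1$, I would work in a tubular neighborhood of $\partial\Omega$ and factor $u=d\,\varphi_u$, $v=d\,\varphi_v$, where $d$ is a smooth boundary-defining function and $\varphi_u,\varphi_v$ are $C^{1,\alpha}$ functions that are strictly positive up to the boundary (by Hopf and the standard boundary regularity for $u,v$); then $\sqrt{uv}=d\sqrt{\varphi_u\varphi_v}$ belongs to $C^{2}(\Omega)\cap C^1(\bar\Omega)$ with $\nabla\sqrt{uv}\cdot\mathbf{n}=-\sqrt{\varphi_u\varphi_v}\,\big|_{\partial\Omega}<0$.

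The main obstacle is precisely this boundary regularity check for $\sqrt{uv}$ in the Dirichlet case: the square-root singularity must be absorbed by the simultaneous first-order vanishing of $u$ and $v$. Apart from this verification, the argument is very short: one direction uses only the tautology $Lu/u\equiv\lambda_1$, and the other reduces at once to the maximality statement in Lemma \ref{L2} once $\sqrt{uv}$ is shown to be an admissible test function.
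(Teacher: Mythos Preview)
Your argument is correct and follows exactly the paper's two-step proof: plug $\omega=u$ to get $\lambda_1\le\inf_p\sup_\omega$, and plug $p^2=uv$ together with $\sup_\omega J(\omega)=\lambda_1$ (Lemma~\ref{L2}, restated as Lemma~\ref{L3}) to get the reverse inequality. The only difference is that you actually check $\sqrt{uv}\in\mathbb{S}_b$ (via the Robin computation for $0\le b<1$ and the factorization $u=d\varphi_u$, $v=d\varphi_v$ for $b=1$), whereas the paper simply writes ``choose $p^2=uv$'' without spelling this out; your added verification is a genuine improvement in rigor, not a departure in strategy.
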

This min-max formula may not be
valid for general second elliptic operators,
 and it reduces to the classical Rayleigh-Ritz formula when $V=0$, by treating $p^2\mathrm{d}x$ as some probability measure;
 See Remark \ref{R3} for details. Different from the formula (\ref{Intro3}), the  min-max characterization in Theorem \ref{th4}
 relies on the properties of functional $J$.  They however may be connected via a min-max theorem in \cite{nk24}.
 Via  functional $J$ we observe that the min-max  formula attains the extremum when $p^2=uv$.

The rest of this paper is organized as follows: In Section \ref{S2}, we shall give some
properties of functional $J$. Section \ref{S3} is devoted to the proof of Theorems \ref{th3}
 and  \ref{Rth2}. In Section \ref{S4} we establish the new  min-max characterization of the principal eigenvalue.
  Finally, the implications of our method/results and some open questions will be discussed in Section \ref{S5}.

%The title of your section 2
\section{\bf Properties of functional $J$ }\label{S2}
 We shall present some properties of functional $J$ in this section, which are crucial to the proofs of main results in this paper. Before proceeding further, we point out again that throughout this paper, $u$ and $v$ are the principal eigenfunctions corresponding to $L$ and $L^{*}$, respectively,  with general  boundary conditions.
  Due to the slight difference between  the definitions of  functional $J$ in the cases of $0\leq b<1$ and $b=1$, we divide this section into two subsections.

 \subsection{Neumann and Robin boundary conditions: $0\leq b<1$}\label{sub1}
 Recalling the regularity requirements of coefficients $c$, $\mathbf{V}$ and matrix field $a(x)$, Sobolev embedding theorem implies that $u, v\in C^{2,\alpha}(\Omega)$ and $u,v\in \mathbb{S}_b$ for $0\leq b<1$. We emphasize here that the constant $b$ is confined to $0\leq b<1$  unless otherwise specified, and the incompressible flow $\mathbf{V}$ satisfies $\mathrm{div}\mathbf{V}=0$ in $\Omega$ with $\mathbf{V}\cdot \mathbf{n}=0~\mathrm{on}~\partial\Omega$ in this subsection. Also, the eigenfunctions can be normalized as $\int_{\Omega}u^2\mathrm{d}x=1$ and $\int_{\Omega}uv\mathrm{d}x=1$. We now  recall the functional associated to
 operator $L$ with Neumann or Robin boundary conditions, defined on $\mathbb{S}_b$ as in Section \ref{S1},
 \begin{equation}\label{Liu3}
   J(\omega)=\int_{\Omega}u v\left(\frac{L\omega}{\omega}\right)\mathrm{d}x,~~\quad \mathrm{}~\omega\in\mathbb{S}_b.
 \end{equation}
For any $\omega\in \mathbb{S}_b$,  a simple but useful observation from (\ref{Liu3}) leads to
\begin{equation}\label{16}
\begin{split}
J(\omega)=&-\int_{\Omega}u v\left[\frac{\mathrm{div}(a(x)\nabla\omega)}{\omega}\right]\mathrm{d}x+\int_{\Omega} u v\left[\frac{\mathbf{V}\cdot\nabla\omega}{\omega}\right]\mathrm{d}x+\int_{\Omega}u v c\,\mathrm{d}x\\
=&-\int_{\partial\Omega}u v\Big[a(x)\nabla\log\omega\Big]\cdot\mathbf{n}\mathrm{d}S_x+\int_{\Omega}\nabla\left(\frac{u v}{\omega}\right)\cdot \Big[a(x)\nabla\omega\Big]\mathrm{d}x\\
&+\int_{\Omega} u v\mathbf{V}\cdot\nabla\log\omega\mathrm{d}x+\int_{\Omega}u v c\mathrm{d}x\\
=&-\int_{\partial\Omega}u v\Big[a(x)\nabla\log\omega\Big]\cdot\mathbf{n}\mathrm{d}S_x-\int_{\Omega}u v\Big\{(\nabla\log\omega)\cdot\left[ a(x)\nabla\log\omega\right]\Big\}\mathrm{d}x\\
&+\int_{\Omega}\Big[ u v\mathbf{V}+a(x)\nabla(u v)\Big]\cdot\nabla\log\omega\mathrm{d}x+\int_{\Omega}u v c\mathrm{d}x.\\
\end{split}
\end{equation}
%where we used the fact $\nabla\omega\cdot\mathbf{n}<0$ to derive $\frac{uv}{\omega}=0$ on $\partial\Omega$ so that $\int_{\partial\Omega}u v\Big[A(x)\nabla\log\omega\Big]\cdot\mathbf{n}\mathrm{d}x=0$.
%And furthermore, if $\omega\in\mathbb{S}_{N}$, then
%\begin{equation}\label{17}
%J(\omega)=-\int_{\Omega}u v\Big(\nabla\log\omega\cdot A(x)\nabla\log\omega\Big)\mathrm{d}x+\int_{\Omega}\Big( u v\mathbf{V}+A(x)\nabla(u v)\Big)\cdot\nabla\log\omega\mathrm{d}x+\int_{\Omega}u v c\mathrm{d}x.
%\end{equation}

% By equality (\ref{16}), the following fact  follows from the positive definiteness of $a(x)$ and the convexity of function $f=\mathrm{\mathbf{x}}^{\mathrm{T}}a\mathrm{\mathbf{x}}$:
%\begin{proposition}\label{P2}
%Given $\omega_1,\omega_2\in\mathbb{S}_b$, $J(\omega^{t}_1\omega^{1-t}_2)\geq tJ(\omega_1)+(1-t)J(\omega_2)$
%holds for $t\in[0,1]$ and the equality holds
%for some $t\in (0, 1)$  if and only if $\omega_1=c\omega_2$ for some constant $c>0$.
%\end{proposition}

%Although the proof of Proposition \ref{P2} is trivial, the consequence of this result, depicting the convexity of $J$ in some sense, is of major importance.

By equality (\ref{16}), %another key fact is that
we show that the principal eigenfunction $u$ is  a critical point of $J$.
 %which is stated as following result and the proof is straightforward.
\begin{proposition}\label{P1}
$J'(u)\varphi=0$ for all $\varphi\in  \tilde{\mathbb{S}}_b\triangleq\Big\{\varphi\in C^{2}(\Omega)\cap C^1(\bar{\Omega}): b\varphi+(1-b)\left[a(x)\nabla \varphi\right]\cdot\mathbf{n}=0~\mathrm{on}~\partial\Omega\Big\}$.
\end{proposition}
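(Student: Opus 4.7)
The plan is to compute the Gateaux derivative of $J$ at $u$ directly and then invoke the eigenequations for $u$ and $v$, together with the self-adjoint pairing enabled by the incompressibility and boundary conditions.

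First, I would expand $J(u + t\varphi)$ using $L(u+t\varphi) = \lambda_1 u + tL\varphi$, differentiate at $t=0$ under the integral (justified since $u > 0$ in $\bar\Omega$ for $0 \le b < 1$, so $1/(u+t\varphi)$ is smooth for small $t$), and simplify using $\partial_t\bigl[(\lambda_1 u + tL\varphi)/(u+t\varphi)\bigr]_{t=0} = (L\varphi)/u - \lambda_1\varphi/u$. This yields the clean identity
\begin{equation*}
J'(u)\varphi = \int_\Omega v\, L\varphi\, dx - \lambda_1 \int_\Omega v\varphi\, dx.
\end{equation*}

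Next, I would transfer $L$ onto $v$ via integration by parts. Writing $L\varphi = -\mathrm{div}(a\nabla\varphi) + \mathbf{V}\cdot\nabla\varphi + c\varphi$, the diffusion term gives boundary contributions $\int_{\partial\Omega}[\varphi\,(a\nabla v)\cdot\mathbf{n} - v\,(a\nabla\varphi)\cdot\mathbf{n}]\,dS_x$ after integrating by parts twice, while the advection term produces $\int_{\partial\Omega} v\varphi\,\mathbf{V}\cdot\mathbf{n}\,dS_x - \int_\Omega \varphi\,\mathrm{div}(v\mathbf{V})\,dx$. By $\mathrm{div}\mathbf{V} = 0$ in $\Omega$ and $\mathbf{V}\cdot\mathbf{n} = 0$ on $\partial\Omega$, the latter reduces to $-\int_\Omega \varphi\,\mathbf{V}\cdot\nabla v\,dx$, turning the volume part into exactly $\int_\Omega \varphi\, L^\ast v\, dx$.

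The main step (really the only technical point) is checking that the remaining boundary integrand $\varphi\,(a\nabla v)\cdot\mathbf{n} - v\,(a\nabla\varphi)\cdot\mathbf{n}$ vanishes pointwise on $\partial\Omega$. Since $\mathbf{V}\cdot\mathbf{n}=0$, the adjoint operator $L^\ast$ carries the same Robin/Neumann boundary conditions as $L$, so $v$ satisfies $bv + (1-b)(a\nabla v)\cdot\mathbf{n} = 0$; by definition $\varphi \in \tilde{\mathbb{S}}_b$ satisfies the same relation. For $b=0$ both conormal derivatives vanish, and for $0 < b < 1$ we have $(a\nabla v)\cdot\mathbf{n} = -\tfrac{b}{1-b}v$ and $(a\nabla\varphi)\cdot\mathbf{n} = -\tfrac{b}{1-b}\varphi$, so the two terms cancel.

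Combining these steps gives
\begin{equation*}
\int_\Omega v\,L\varphi\,dx = \int_\Omega \varphi\, L^\ast v\, dx = \lambda_1 \int_\Omega v\varphi\,dx,
\end{equation*}
using $L^\ast v = \lambda_1 v$, and substituting back yields $J'(u)\varphi = 0$ for every $\varphi \in \tilde{\mathbb{S}}_b$, as desired. I anticipate no serious obstacle beyond carefully bookkeeping the boundary terms in the integration by parts, which is where the hypotheses $\mathrm{div}\mathbf{V}=0$ and $\mathbf{V}\cdot\mathbf{n}=0$ enter essentially.
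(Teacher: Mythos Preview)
Your argument is correct. By differentiating $J(u+t\varphi)$ at $t=0$ and using $Lu=\lambda_1 u$ right away, you obtain the clean identity $J'(u)\varphi=\int_\Omega v\,L\varphi\,dx-\lambda_1\int_\Omega v\varphi\,dx$, and the standard adjoint pairing (with boundary terms cancelled exactly as you describe) finishes the job.

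This is a genuinely different, and more economical, route than the paper's. The authors first rewrite $J(\omega)$ via integration by parts into the form (\ref{16}), then differentiate that expression to obtain an explicit formula (\ref{17}) for $J'(\omega)\varphi$ at a \emph{general} $\omega\in\mathbb{S}_b$, and only afterwards specialize to $\omega=u$ and perform a rather lengthy reduction. Your approach avoids all of that by exploiting $Lu=\lambda_1 u$ immediately, which collapses the computation to the single self-adjointness identity $\int_\Omega v\,L\varphi=\int_\Omega \varphi\,L^\ast v$. The trade-off is that the paper's longer computation produces the general formula (\ref{17}), which they then reuse in the proof of Lemma~\ref{L2}: there they identify the remainder $J(u)-J(\omega)$ (minus the quadratic term) as $-J'(u)\varphi$ with the specific choice $\varphi=u\log(\omega/u)$, and invoke Proposition~\ref{P1} via (\ref{17}) to kill it. Your proof establishes the proposition cleanly, but does not by itself furnish the expression (\ref{17}) that the paper leans on downstream; if you continue with this approach you would need a separate direct computation for Lemma~\ref{L2}.
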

\begin{proof}
Using equality (\ref{16}), the  Fr\'{e}chet derivation $J'(\omega)$ of $\omega\in\mathbb{S}_b$ can be written as
\begin{equation}\label{17}
\begin{split}
J'(\omega)\varphi=&-\int_{\partial\Omega}u v\left[a(x)\nabla\left(\frac{\varphi}{\omega}\right)\right]\cdot\mathbf{n}\mathrm{d}S_x-2\int_{\Omega}u v\left\{(\nabla\log\omega)\cdot\left[ a(x)\nabla\left(\frac{\varphi}{\omega}\right)\right]\right\}\mathrm{d}x\\
&+\int_{\Omega}\Big[ u v\mathbf{V}+a(x)\nabla(u v)\Big]\cdot\nabla\left(\frac{\varphi}{\omega}\right)\mathrm{d}x,
\end{split}
\end{equation}
for all $\varphi\in \tilde{\mathbb{S}}_b$.
 By the boundary conditions of $u$ and $v$, a direct calculation via integration by parts gives
\begin{equation*}
\begin{split}
&J'(u)\varphi\\
=&-\int_{\partial\Omega}u v\left[a(x)\nabla\left(\frac{\varphi}{u}\right)\right]\cdot\mathbf{n}\mathrm{d}S_x-2\int_{\Omega}u v\left\{(\nabla\log u)\cdot\left[ a(x)\nabla\left(\frac{\varphi}{u}\right)\right]\right\}\mathrm{d}x\\
&+\int_{\Omega}\Big[ u v\mathbf{V}+a(x)\nabla(u v)\Big]\cdot\nabla\left(\frac{\varphi}{u}\right)\mathrm{d}x\\
=&-\int_{\partial\Omega}u v\left[a(x)\nabla\left(\frac{\varphi}{u}\right)\right]\cdot\mathbf{n}\mathrm{d}S_x-2\int_{\partial\Omega}\left(\frac{v\varphi}{u}\right)\Big[ a(x)\nabla u\Big]\cdot\mathbf{n}\mathrm{d}S_x+\int_{\partial\Omega}\left(\frac{\varphi}{u}\right)\Big[a(x)\nabla (uv)\Big]\cdot\mathbf{n}\mathrm{d}S_x\\
&+2\int_{\Omega}\left(\frac{\varphi}{u}\right)\nabla\cdot\Big[v a(x)\nabla u\Big]\mathrm{d}x-\int_{\Omega}\left(\frac{\varphi}{u}\right)\nabla\cdot\Big[ u v\mathbf{V}+a(x)\nabla(u v)\Big]\mathrm{d}x\\
=&-\int_{\partial\Omega}v\Big[a(x)\nabla\varphi\Big]\cdot\mathbf{n}\mathrm{d}S_x+\int_{\partial\Omega}\varphi\Big[a(x)\nabla v\Big]\cdot\mathbf{n}\mathrm{d}S_x+2\int_{\Omega}\left(\frac{\varphi}{u}\right)\Big\{v\mathrm{div}(a(x)\nabla u)+\nabla v\cdot [a(x)\nabla u]\Big\}\mathrm{d}x\\
&-\int_{\Omega}\left(\frac{\varphi}{u}\right)\Big[\nabla(u v)\cdot\mathbf{V}\Big]\mathrm{d}x-\int_{\Omega}\left(\frac{\varphi}{u}\right)\mathrm{div}\Big[a(x)\nabla(u v)\Big]\mathrm{d}x\\
=&2\int_{\Omega}\left(\frac{\varphi}{u}\right)\Big\{v\mathrm{div}(a(x)\nabla u)+\nabla v\cdot [a(x)\nabla u]\Big\}\mathrm{d}x-\int_{\Omega}\mathbf{V}\cdot\left[\nabla v+v\left(\frac{\nabla u}{u}\right)\right]\varphi\mathrm{d}x\\
&-\int_{\Omega}\left(\frac{\varphi}{u}\right)\Big\{v\mathrm{div}(a(x)\nabla u)+2\nabla v\cdot [a(x)\nabla u]+u\mathrm{div}(a(x)\nabla v)\Big\}\mathrm{d}x\\
=&\int_{\Omega}\left(\frac{v\varphi}{u}\right)\mathrm{div}(a(x)\nabla u)\mathrm{d}x-\int_{\Omega} v\varphi\mathbf{V}\cdot\left(\frac{\nabla u}{u}\right)\mathrm{d}x-\int_{\Omega} \varphi\mathbf{V}\cdot\nabla v\mathrm{d}x-\int_{\Omega}\varphi\mathrm{div}(a(x)\nabla v)\mathrm{d}x\\
=&-\int_{\Omega}\left(\frac{v}{u}\right)\Big[-\mathrm{div}(a(x)\nabla u)+\mathbf{V}\cdot\nabla u\Big]\varphi\mathrm{d}x+\int_{\Omega}\Big[-\mathrm{div}(a(x)\nabla v)-\mathbf{V}\cdot\nabla v\Big]\varphi\mathrm{d}x.
\end{split}
\end{equation*}
Here we used the additional assumption  $\mathbf{V}\cdot\mathbf{n}=0~\mathrm{on}~\partial\Omega$ and the boundary conditions of $v$ and $\varphi$ to remove the boundary integral. Recall the fact that $L u=\lambda_1u$ and $L^{*} v=\lambda_1v$ and proceed to compute
\begin{equation*}
\begin{split}
J'(u)\varphi=&-\int_{\Omega}\left(\frac{v}{u}\right)\left(\lambda_1u-c u\right)\varphi\mathrm{d}x+\int_{\Omega}\left(\lambda_1v-c v\right)\varphi\mathrm{d}x=0,
\end{split}
\end{equation*}
as anticipated. The proof is complete.
%For  Neumann boundary condition, since $\omega\in\mathbb{S}_{N}$ in this case, we can write $J(\omega)$  as
%\begin{equation}\label{17}
%J(\omega)=-\int_{\Omega}u v\Big(\nabla\log\omega\cdot A(x)\nabla\log\omega\Big)\mathrm{d}x+\int_{\Omega}\Big( u v\mathbf{V}+A(x)\nabla(u v)\Big)\cdot\nabla\log\omega\mathrm{d}x+\int_{\Omega}u v c\mathrm{d}x.
%\end{equation}
%And then proceeding as in the proof of Dirichlet's case and getting rid of all integrals over boundary, we can prove the result similarly. And the proof is completed.
\end{proof}

%Thanks to Proposition \ref{P1}, we can
Next we establish a crucial property of functional $J$.
%result in our development of eigenvalue problem is the following powerful and interesting formula.
\begin{lemma}\label{L2}
For any  $\omega\in\mathbb{S}_b$, the following formula holds:
$$J(u)=J(\omega)+\int_{\Omega}u v\left\{\left[\nabla\log\left(\frac{\omega}{u}\right)\right]\cdot\left[ a(x)\nabla\log\left(\frac{\omega}{u}\right)\right]\right\}\mathrm{d}x.$$
\end{lemma}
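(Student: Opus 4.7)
My plan is to compute the difference $J(u)-J(\omega)$ directly from the third (fully expanded) equality in (\ref{16}). Introducing $\psi:=\log(\omega/u)$ so that $\nabla\log\omega=\nabla\log u+\nabla\psi$, I would expand the quadratic form
$$(\nabla\log\omega)\cdot[a(x)\nabla\log\omega]=(\nabla\log u)\cdot[a(x)\nabla\log u]+2(\nabla\log u)\cdot[a(x)\nabla\psi]+(\nabla\psi)\cdot[a(x)\nabla\psi],$$
and split the boundary integral and the linear-in-$\nabla\log\omega$ term of (\ref{16}) correspondingly. Since $J(u)$ corresponds to the choice $\psi\equiv 0$, subtracting isolates the targeted quadratic $\int_{\Omega}uv(\nabla\psi)\cdot[a(x)\nabla\psi]\mathrm{d}x$ together with a boundary remainder $\int_{\partial\Omega}uv[a(x)\nabla\psi]\cdot\mathbf{n}\,\mathrm{d}S_x$, a cross term $2\int_{\Omega}uv(\nabla\log u)\cdot[a(x)\nabla\psi]\mathrm{d}x$, and $-\int_{\Omega}[uv\mathbf{V}+a(x)\nabla(uv)]\cdot\nabla\psi\,\mathrm{d}x$. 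Using $2uv\nabla\log u=2v\nabla u$ together with the product rule $a(x)\nabla(uv)=va(x)\nabla u+ua(x)\nabla v$, the two bulk remainders compress into a single integral $\int_{\Omega}[va(x)\nabla u-ua(x)\nabla v-uv\mathbf{V}]\cdot\nabla\psi\,\mathrm{d}x$.

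It then suffices to show this combined bulk integral plus the boundary remainder vanishes. I would integrate by parts one more time, moving $\nabla\psi$ across to obtain $-\int_{\Omega}\psi\,\mathrm{div}[va(x)\nabla u-ua(x)\nabla v-uv\mathbf{V}]\mathrm{d}x$ plus a boundary integral of $\psi$ times $[va(x)\nabla u-ua(x)\nabla v-uv\mathbf{V}]\cdot\mathbf{n}$. The key algebraic identity is that this divergence vanishes identically: applying the product rule, the symmetry of $a(x)$ and $\mathrm{div}\mathbf{V}=0$ collapses it to $v\,\mathrm{div}(a(x)\nabla u)-u\,\mathrm{div}(a(x)\nabla v)-\mathbf{V}\cdot\nabla(uv)$; substituting $Lu=\lambda_1 u$ and $L^{*}v=\lambda_1 v$ makes the $cuv$ and $\lambda_1 uv$ pieces cancel and reassembles the advective contributions into $\mathbf{V}\cdot\nabla(uv)$, which exactly cancels the remaining term.

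All surviving boundary terms vanish under the standing hypothesis $0\le b<1$: the assumption $\mathbf{V}\cdot\mathbf{n}=0$ removes any $uv\mathbf{V}\cdot\mathbf{n}$ contribution, the common Robin condition satisfied by $u$ and $v$ gives $v[a(x)\nabla u]\cdot\mathbf{n}=u[a(x)\nabla v]\cdot\mathbf{n}$, and since $\omega$ and $u$ both lie in $\mathbb{S}_b$ with the same Robin constant, the relation $\omega=ue^{\psi}$ forces $[a(x)\nabla\psi]\cdot\mathbf{n}=0$ on $\partial\Omega$, killing the remainder $\int_{\partial\Omega}uv[a(x)\nabla\psi]\cdot\mathbf{n}\,\mathrm{d}S_x$. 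The principal obstacle is careful bookkeeping across several integrations by parts; the conceptual content is the divergence-free nature of the ``adjoint current'' $va(x)\nabla u-ua(x)\nabla v-uv\mathbf{V}$, expressing the duality between $L$ and $L^{*}$ in the incompressible setting, and this is also what underlies Proposition~\ref{P1}.
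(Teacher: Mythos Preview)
Your argument is correct and follows the same overall architecture as the paper: expand $J(u)-J(\omega)$ from (\ref{16}), separate out the quadratic term $\int_{\Omega}uv\,(\nabla\psi)\cdot[a(x)\nabla\psi]\,\mathrm{d}x$, and show the leftover (the boundary term, the cross term, and the linear term in $\nabla\psi$) vanishes. The difference lies in how that leftover is dispatched. The paper recognizes the remainder as exactly $-J'(u)\varphi$ with $\varphi=u\log(\omega/u)=u\psi$ (compare the three terms you list with (\ref{17})) and then invokes Proposition~\ref{P1}, whose proof is a separate, somewhat lengthy integration-by-parts computation. You instead compress the remainder into $\int_{\Omega}[v\,a(x)\nabla u-u\,a(x)\nabla v-uv\,\mathbf{V}]\cdot\nabla\psi\,\mathrm{d}x$ plus boundary terms and show directly that the ``adjoint current'' $v\,a(x)\nabla u-u\,a(x)\nabla v-uv\,\mathbf{V}$ is divergence-free, using $Lu=\lambda_1 u$, $L^{*}v=\lambda_1 v$, the symmetry of $a(x)$, and $\mathrm{div}\,\mathbf{V}=0$. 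Your route is more self-contained and makes the underlying structure (a pointwise conservation law expressing the $L/L^{*}$ duality) explicit; the paper's route is more modular, isolating the critical-point property of $u$ as a standalone statement. As you yourself note at the end, the two are the same computation packaged differently: your divergence identity is precisely the content hidden inside the proof of Proposition~\ref{P1}.
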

\begin{proof}

To obtain this formula, some elementary but
 a bit tedious manipulations are needed. Together with equality (\ref{16}), a direct  calculation  yields
\begin{equation*}
\begin{split}
& J(u)-J(\omega)\\
=&\int_{\partial\Omega}u v\left[a(x)\nabla\log\left(\frac{\omega}{u}\right)\right]\cdot\mathbf{n}\mathrm{d}S_x+\int_{\Omega}u v\Big\{(\nabla\log\omega)\cdot\left[ a(x)\nabla\log\omega\right]\Big\}\mathrm{d}x\\
&-\int_{\Omega}u v\Big\{(\nabla\log u)\cdot\left[ a(x)\nabla\log u\right]\Big\}\mathrm{d}x-\int_{\Omega}\Big[ u v\mathbf{V}+a(x)\nabla(u v)\Big]\cdot\nabla\log\left(\frac{\omega}{u}\right)\mathrm{d}x\\
=&\int_{\partial\Omega}u v\left[a(x)\nabla\log\left(\frac{\omega}{u}\right)\right]\cdot\mathbf{n}\mathrm{d}S_x+\int_{\Omega}u v\left\{\Big[\nabla\log\left(u \omega\right)\Big]\cdot \left[ a(x)\nabla\log\left(\frac{\omega}{u}\right)\right]\right\}\mathrm{d}x\\
&-\int_{\Omega}\Big[ u v\mathbf{V}+a(x)\nabla(u v)\Big]\cdot\nabla\log\left(\frac{\omega}{u}\right)\mathrm{d}x\\
=&\int_{\partial\Omega}u v\left[a(x)\nabla\log\left(\frac{\omega}{u}\right)\right]\cdot\mathbf{n}\mathrm{d}S_x+\int_{\Omega}u v\left\{\left[\nabla\log\left(\frac{\omega}{u}\right)\mathrm{d}x+2\nabla\log u\right]\cdot\left[ a(x)\nabla\log\left(\frac{\omega}{u}\right)\right]\right\}\mathrm{d}x\\
&-\int_{\Omega}\Big[ u v\mathbf{V}+a(x)\nabla(u v)\Big]\cdot\nabla\log\left(\frac{\omega}{u}\right)\mathrm{d}x\\
=&\int_{\Omega}u v\left\{\left[\nabla\log\left(\frac{\omega}{u}\right)\right] \cdot \left[a(x)\nabla\log\left(\frac{\omega}{u}\right)\right]\right\}\mathrm{d}x+\int_{\partial\Omega}u v\left[a(x)\nabla\log\left(\frac{\omega}{u}\right)\right]\cdot\mathbf{n}\mathrm{d}S_x\\
&+2\int_{\Omega}u v\left\{(\nabla\log u)\cdot\left[ a(x)\nabla\log\left(\frac{\omega}{u}\right)\right]\right\}\mathrm{d}x-\int_{\Omega}\Big[ u v\mathbf{V}+a(x)\nabla(u v)\Big]\cdot\nabla\log\left(\frac{\omega}{u}\right)\mathrm{d}x,\\
\end{split}
\end{equation*}
where we have used the symmetry of matrix field $a(x)$
and the boundary conditions of $\omega$ and $u$.  By straightforward calculations
we have $u\log\left(\frac{\omega}{u}\right)\in \tilde{\mathbb{S}}_b$ for any $\omega\in \mathbb{S}_b$. Choosing $\varphi=u\log\left(\frac{\omega}{u}\right)$ in equality (\ref{17}), by Proposition \ref{P1} we have
\begin{equation*}
\begin{split}
   J(u)-J(\omega)&=\int_{\Omega}u v\left\{\left[\nabla\log\left(\frac{\omega}{u}\right)\right]\cdot\left[ a(x)\nabla\log\left(\frac{\omega}{u}\right)\right]\right\}\mathrm{d}x-J'(u)\varphi\\
   &=\int_{\Omega}u v\left\{\left[\nabla\log\left(\frac{\omega}{u}\right)\right]\cdot\left[ a(x)\nabla\log\left(\frac{\omega}{u}\right)\right]\right\}\mathrm{d}x.
   \end{split}
\end{equation*}
The assertion of Lemma \ref{L2} thus follows.
\end{proof}

%As for  Neumann's case, applying equality (\ref{17}) and proceeding as before yields
%\begin{equation*}
%\begin{split}
%   J(u)-J(v)=&\int_{\Omega}u v\nabla\log\left(\frac{v}{u}\right)\cdot A(x)\nabla\log\left(\frac{v}{u}\right)\mathrm{d}x+2\int_{\Omega}u v\nabla\log u\cdot\nabla\log\left(\frac{v}{u}\right)\mathrm{d}x\\
%   &-\int_{\Omega}\Big(A u v\mathbf{V}+A(x)\nabla(u v)\Big)\cdot\nabla\log\left(\frac{v}{u}\right)\mathrm{d}x,\\
%   =&\int_{\Omega}u v\nabla\log\left(\frac{v}{u}\right)\cdot A(x)\nabla\log\left(\frac{v}{u}\right)\mathrm{d}x-J'(u)\varphi,\\
%   \end{split}
%\end{equation*}
%which implies Lemma \ref{L3} also holds for  Neumann's case by using  Proposition \ref{P1} again. This proof is completed.

The following result is an immediate consequence of Lemma  \ref{L2}.
%corollary, we can derive an insightful but seemingly surprised formula as follows, which plays a central role in Section \ref{S3}.
\begin{corollary}\label{C1}
$$\int_{\Omega}v L u\mathrm{d}x-\int_{\Omega}u L v\mathrm{d}x=\int_{\Omega}u v\left\{\left[\nabla\log\left(\frac{v}{u}\right)\right]\cdot\left[ a(x)\nabla\log\left(\frac{v}{u}\right)\right]\right\}\mathrm{d}x.$$
\end{corollary}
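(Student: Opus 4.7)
The plan is to read the corollary as the identity obtained by specializing Lemma \ref{L2} to $\omega = v$, the principal eigenfunction of the adjoint operator $L^{*}$. First I would verify the hypothesis: in the setting of Subsection \ref{sub1}, $v$ is a positive classical eigenfunction of $L^{*}$ satisfying the same Robin/Neumann condition $bv+(1-b)[a(x)\nabla v]\cdot\mathbf{n}=0$ on $\partial\Omega$ as $u$, and it has the same $C^{2,\alpha}$ regularity by the Sobolev embedding argument already recorded at the start of the subsection. Hence $v\in\mathbb{S}_b$, so Lemma \ref{L2} applies with $\omega=v$.

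Next I would compute the two sides of the functional identity explicitly from the definition \eqref{Liu3}. On one hand,
\begin{equation*}
J(u)=\int_{\Omega}uv\left(\frac{Lu}{u}\right)\mathrm{d}x=\lambda_1\int_{\Omega}uv\,\mathrm{d}x=\int_{\Omega}vLu\,\mathrm{d}x,
\end{equation*}
where I used the eigenvalue equation $Lu=\lambda_1 u$ and the normalization $\int_{\Omega}uv\,\mathrm{d}x=1$. On the other hand, directly from the definition,
\begin{equation*}
J(v)=\int_{\Omega}uv\left(\frac{Lv}{v}\right)\mathrm{d}x=\int_{\Omega}uLv\,\mathrm{d}x.
\end{equation*}

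Finally I would substitute these two evaluations into the identity
\begin{equation*}
J(u)=J(v)+\int_{\Omega}uv\left\{\left[\nabla\log\left(\tfrac{v}{u}\right)\right]\cdot\left[a(x)\nabla\log\left(\tfrac{v}{u}\right)\right]\right\}\mathrm{d}x
\end{equation*}
supplied by Lemma \ref{L2} (with $\omega=v$) and rearrange to obtain exactly the claimed formula. There is essentially no obstacle here since the lemma does the real work; the only thing that needs to be checked carefully is that $v$ actually lies in the cone $\mathbb{S}_b$ so that Lemma \ref{L2} is applicable, and this is immediate from the definition of $\mathbb{S}_b$ in the range $0\le b<1$ together with the positivity and regularity of the adjoint principal eigenfunction.
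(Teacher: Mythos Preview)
Your proposal is correct and follows essentially the same approach as the paper: recognize that $\int_{\Omega}vLu\,\mathrm{d}x=J(u)$ and $\int_{\Omega}uLv\,\mathrm{d}x=J(v)$, then invoke Lemma~\ref{L2} with $\omega=v$. The only minor remark is that the normalization $\int_{\Omega}uv\,\mathrm{d}x=1$ is not actually needed in your computation of $J(u)$ (the equality $\lambda_1\int_\Omega uv\,\mathrm{d}x=\int_\Omega vLu\,\mathrm{d}x$ follows directly from $Lu=\lambda_1 u$), and your explicit verification that $v\in\mathbb{S}_b$ is a point the paper handles earlier in the subsection rather than in the proof itself.
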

\begin{proof}
A simple observation leads to
$$\int_{\Omega}u L v\mathrm{d}x=\int_{\Omega}u v\left(\frac{L v}{v}\right)\mathrm{d}x=J(v),$$
and analogously  $\int_{\Omega}v L u\mathrm{d}x=J(u)$. Hence Corollary \ref{C1} follows from  Lemma \ref{L2}.
\end{proof}

\subsection{Dirichlet boundary conditions: $b=1$}\label{sub2}
The case of Dirichlet boundary conditions is slightly different from the Neumann or Robin
 boundary conditions, as noted in \cite{nk2}. It is perhaps worth pointing out that in this case, the functional $J$ shall be defined on $\mathbb{S}_1$ and the extra assumption $\mathbf{V}\cdot n=0~\mathrm{on}~\partial\Omega$ is not needed
  for further discussions.  Hopf Boundary Lemma implies that $\nabla u\cdot\mathbf{n}<0$ and $\nabla v\cdot\mathbf{n}<0$ on $\partial\Omega$, and thus  $u,v\in \mathbb{S}_1$ so that  $J(u),J(v)$ are well defined. Moreover, the adjoint operator of $L$ subject to Dirichlet boundary conditions can be written as $L^{*}=-\mathrm{div}(A(x)\nabla )-\mathbf{V}\cdot\nabla +c(x)$ without $\mathbf{V}\cdot n=0~\mathrm{on}~\partial\Omega$, due to $u=0$ on $\partial\Omega$. Thanks to $\nabla\omega\cdot\mathbf{n}<0$ on $\partial\Omega$,  we have $\frac{uv}{\omega}=0$ on $\partial\Omega$ to get $\int_{\partial\Omega}u v\left[a(x)\nabla\log\omega\right]\cdot\mathbf{n}\mathrm{d}S_x=0$ in equality (\ref{16}).

With the same argument as in the Neumann or Robin boundary conditions,
getting rid of all boundary
integrals, we can show that the principal eigenfunction $u$ is still a critical point of $J$ in this case,
 i.e., $J'(u)\varphi=0$ for all $\varphi\in\tilde{\mathbb{S}}_1$. Based on this fact, the formula in Lemma \ref{L2}
  remains true. As the proof is similar, thus it is omitted. Therefore, the properties of functional $J$ listed
   in subsection \ref{sub1} hold for all $0\leq b\leq1$.

\section{\bf Monotonicity and boundedness of principal eigenvalue}\label{S3}
%Functional $J$, as we shall see below, is a powerful tool to study  the monotonicity of principal eigenvalue $\lambda_1(A)$. Add the advection amplitude $A$ in operator $L$ to define
 Recall that $L_{A}=-\mathrm{div}(a(x)\nabla )+A\mathbf{V}\cdot\nabla +c(x)$ and its adjoint operator $L^{*}_{A}=-\mathrm{div}(a(x)\nabla )-A\mathbf{V}\cdot\nabla +c(x)$.
 %We then turn to consider the eigenvalue problem (\ref{Liu1}).
 Here we emphasize that throughout this paper,   $\mathbf{V}$ satisfies $\mathrm{div}\mathbf{V}=0~\mathrm{in}~\Omega$ and an additional assumption $\mathbf{V}\cdot n=0~\mathrm{on}~\partial\Omega$ is also needed for $0\leq b<1$ (see Remark \ref{R2} below). For all $A\geq0$, there exists a unique principal eigenvalue $\lambda_1(A)$
 for eigenvalue problem (\ref{Liu1}), and a unique (up to multiplication) eigenfunction $u_A$ satisfying problem (\ref{Liu1}). %Without loss of generality, we only consider the case $A\geq0$, since it follows obviously that $\lambda_1(A)=\lambda_1(-A)$. %(In fact, $u_A$ is the eigenfunction of $L^{*}_{-A}$).
 %Our goal in this section is to show the proof of Theorem \ref{th3}. Similar to the previous notations,
 We also denote the principle eigenfunction of $L^{*}_{A}$ by some normalized positive function $v_A$
 and write the functional related with problem (\ref{Liu1}) as
$$
J_A(\omega)=\int_{\Omega}u_A v_A\left(\frac{L_A\omega}{\omega}\right)\mathrm{d}x,
\quad\omega\in\mathbb{S}_b.
 $$
%where $\omega\in\mathbb{S}_b$ depending on the boundary condition.\\
%With the help of Lemma \ref{L2}, we now turn to prove Theorem \ref{th3} that answers the previous monotonicity question in \cite{nk2} affirmatively.\\
%\begin{theorem}\label{th3}
%Let  $L_A$ be a Elliptic operator defined by (\ref{Liu1}) subjecting to Dirichlet or Neumann  boundary conditions.  Then $\frac{\partial\lambda_1}{\partial A}(0)=0$ and for $A>0$, either \\$~~~~\mathrm{(i)}$ $\frac{\partial\lambda_1}{\partial A}(A)>0$ if and only if $ \mathbf{V}\cdot\nabla u_0\not\equiv0$ in $\bar{\Omega}$, or \\
%$~~~~\mathrm{(ii)}$ $\lambda_1(A)\equiv\lambda_1(0)$ if and only if $ \mathbf{V}\cdot\nabla u_0\equiv0$ in $\bar{\Omega}$,\\ where
%$u_0$ is the principle eigenfunction of $L_0$ satisfying
%\begin{equation*}
% \begin{cases}
% \begin{split}
% &-\mathrm{div}(A(x)\nabla u_0)+c(x)u_0=\lambda_1(0)u_0,~~\mathrm{in}~\Omega\\
% &u_0>0,~~\mathrm{in}~\Omega\\
% &u_0=0~~\mathrm{or} ~~A(x)\nabla u_0\cdot\mathbf{n}=0,~~\mathrm{on}~\partial\Omega.
% \end{split}
% \end{cases}
%\end{equation*}
%\end{theorem}

Our first goal of this section is to show Theorem \ref{th3}.

\medskip

\noindent{}$\mathrm{\mathbf{Proof~of ~Theorem~\ref{th3}.}}$
Firstly,  if $u_0\in\mathcal{I}_{b}$, then for every $A>0$,  $u_0$ satisfies
\begin{equation*}
 \begin{cases}
 \begin{split}
 &-\mathrm{div}(a(x)\nabla u_0)+A\mathbf{V}\cdot\nabla u_0+c(x)u_0=\lambda_1(0)u_0~~\quad \mathrm{in}~\Omega,\\
 &u_0>0~~\quad \mathrm{in}~\Omega,\\
  &bu_0+(1-b)[a(x)\nabla u_0]\cdot\mathbf{n}=0~~\quad \mathrm{on}~\partial\Omega.
  \end{split}
 \end{cases}
 \end{equation*}
%Using the classical maxmin characterization of principal eigenvalue $\lambda_1(A)$, we can show that
 Hence, $\lambda_1(A)=\lambda_1(0)$ for all $A>0$. This proves part (i).

For the proof of  part (ii), we assume that $u_0\not\in\mathcal{I}_{b}$.
We normalize $u_A$ and $v_A$ such that $\int_\Omega u_A^2\,dx=\int_\Omega u_A v_A\,dx=1$.

Differentiate equation (\ref{Liu1}) with respect to $A$ and denote $\frac{\partial u_A}{\partial A}= u'_A$ for
the sake of brevity, we obtain
\begin{equation}\label{11}
 \begin{cases}
 \begin{split}
 &-\mathrm{div}\Big[a(x)\nabla_x u'_A\Big]+A\mathbf{V}\cdot\nabla_x u'_A+\mathbf{V}\cdot\nabla_x u_A+c(x)u'_A=\frac{\partial\lambda_1}{\partial A}(A)u_A+\lambda_1(A)u'_A~~\mathrm{in}~\Omega,\\
  &bu'_A+(1-b)[a(x)\nabla_x u'_A]\cdot\mathbf{n}=0~~\mathrm{on}~\partial\Omega,~~~\quad \mathrm{} ~~\int_{\Omega}u'_Au_A\mathrm{d}x=0.
  \end{split}
 \end{cases}
 \end{equation}
Multiply (\ref{11}) by $v_A$ and integrate the result in $\Omega$,
 together with the definition of $v_A$ we have
\begin{equation}\label{12}
   \frac{\partial\lambda_1}{\partial A}(A)=\int_{\Omega}v_A\mathbf{V}\cdot\nabla u_A\mathrm{d}x.
\end{equation}

Observe that $u_0=v_0$ for $A=0$. This leads to
$$
\frac{\partial\lambda_1}{\partial A}(0)=\frac{1}{2}\int_{\Omega}\mathbf{V}\cdot\nabla u_0^{2}\mathrm{d}x=0.
$$
Here we used that   $\mathbf{V}$ is divergence free together with $\mathbf{V}\cdot n=0$ on $\partial\Omega$ for $0\leq b<1$ and $u_0=0$ on $\partial\Omega$ for $b=1$.
 %and it remains to consider $A\in(0,\infty)$. We now claim the following fact to proceed further.\\

\medskip

\noindent{}$\mathrm{\mathbf{Claim}}:$ For each $A>0$, $\frac{\partial\lambda_1}{\partial A}(A)\ge 0$,
and either $\frac{\partial\lambda_1}{\partial A}(A)>0$, or $\lambda_1(A)=\lambda_1(0)$.\\

To establish this assertion, it is illuminating to consider the special case of $A=1$. Recall the definition of $L_1$ and $L^{*}_1$ to rewrite equality (\ref{12}) as
$$\frac{\partial\lambda_1}{\partial A}(1)=\frac{1}{2}\int_{\Omega}v_1(L_1-L^{*}_1)u_1\mathrm{d}x=\frac{1}{2}\left[\int_{\Omega}v_1L_1u_1\mathrm{d}x-\int_{\Omega}u_1L_1v_1\mathrm{d}x\right].$$
A direct application of Corollary \ref{C1} and positive definiteness of $a(x)$ yields
$$\frac{\partial\lambda_1}{\partial A}(1)=\frac{1}{2}\int_{\Omega}u_1 v_1\left\{\left[\nabla\log\left(\frac{v_1}{u_1}\right)\right]\cdot\left[ a(x)\nabla\log\left(\frac{v_1}{u_1}\right)\right]\right\}\mathrm{d}x\geq0,$$
and $\frac{\partial\lambda_1}{\partial A}(1)=0$ if and only if $u_1=cv_1$ for some $c>0$. By  $\int_{\Omega}u^2_1=1$ and $\int_{\Omega}u_1v_1=1$, we  see that $c=1$ and $u_1=v_1$.  Furthermore, if $u_1=v_1$, thus
 $L_1u_1=L^{*}_1u_1=\lambda_1(1)u_1$ and hence $\mathbf{V}\cdot\nabla u_1=0$, which further implies that
\begin{equation*}
 \begin{cases}
 \begin{split}
 &-\mathrm{div}(a(x)\nabla u_1)+c(x)u_1=\lambda_1(1)u_1~~\quad \mathrm{in}~\Omega,\\
 &u_1>0~~\quad \mathrm{in}~\Omega,\\
  &bu_1+(1-b)[a(x)\nabla u_1]\cdot\mathbf{n}=0~~\quad \mathrm{on}~\partial\Omega.
  \end{split}
 \end{cases}
 \end{equation*}
%and then the classical maxmin characterization \cite{nk21} of principal eigenvalue
%$$\lambda_0=\sup_{\omega\in\mathbb{S}_b}\inf_{x\in\Omega}\left[\frac{L_0\omega(x)}{\omega(x)}\right]=\inf_{\omega\in\mathbb{S}_b}\sup_{x\in\Omega}\left[\frac{L_0\omega(x)}{\omega(x)}\right]$$
%implies that
Hence, $\lambda_1(1)=\lambda_1(0)$. In summary,  $\frac{\partial\lambda_1}{\partial A}(1)\ge 0$,
and either $\frac{\partial\lambda_1}{\partial A}(1)>0$, or $\lambda_1(1)=\lambda_1(0)$.

We now proceed to consider the general case of $A>0$. Rewrite the operator $L_A$ as
\begin{equation*}
   L_{A}=A\Big(-\mathrm{div}(a(x)\nabla)+\mathbf{V}\cdot\nabla+c(x)\Big)+(1-A)(-\mathrm{div}(a(x)\nabla)+c(x))=A L_1+ (1-A)L_0
\end{equation*}
and define a new elliptic operator $\mathcal{L}_B$ by %with an incompressible flow as
$$ \mathcal{L}_B:=B L_A+ (1-B)L_0.$$
It is easy to verify that $\mathcal{L}_B=AB L_1+ (1-AB)L_0=L_{AB}$. Set $r_1(B)$ as the principal eigenvalue of  $\mathcal{L}_B$. A natural fact is that $r_1(B)=\lambda_1(AB)$. Similar to the above discussion for $B=1$, it follows that $\frac{\partial r_1}{\partial B}(1)\ge 0$, and either $\frac{\partial r_1}{\partial B}(1)>0$, or $r_1(1)=r_1(0)$. In view of $\frac{\partial r_1}{\partial B}(1)=A\frac{\partial\lambda_1}{\partial A}(A)$, the Claim is proved.\\

Before proceeding further to show $\frac{\partial\lambda_1}{\partial A}(A)>0$ for
 all $A>0$, let us calculate $\frac{\partial^{2}\lambda_1}{\partial A^{2}}(0)$ firstly. Differentiate equation (\ref{11}) with respect to $A$ again, and applying the notation $\frac{\partial^{2} u_A}{\partial A^{2}}=u''_A$ for brevity arrives at
\begin{equation}\label{13}
 \begin{cases}
 \begin{split}
 &-\mathrm{div}\Big[a(x)\nabla_x u''_A\Big]+A\mathbf{V}\cdot\nabla_x u''_A+2\mathbf{V}\cdot\nabla_x u'_A+c(x)u''_A\\
 &~~~~=\frac{\partial^{2}\lambda_1}{\partial A^{2}}(A)u_A+2\frac{\partial\lambda_1}{\partial A}(A)u'_A+\lambda_1(A)u''_A~~\quad \mathrm{in}~\Omega,\\
  &bu''_A+(1-b)[a(x)\nabla_x u''_A]\cdot\mathbf{n}=0~~\quad \mathrm{on}~\partial\Omega.
  \end{split}
 \end{cases}
 \end{equation}
 Setting $A=0$ in (\ref{13})
  and multiplying it
   by $u_0$ and integrating the result in $\Omega$, it follows from $\frac{\partial\lambda_1}{\partial A}(0)=0$ that
\begin{equation*}
    \frac{\partial^{2}\lambda_1}{\partial A^{2}}(0)=2\int_{\Omega}u_0\mathbf{V}\cdot\nabla_x u'_0\mathrm{d}x.
\end{equation*}
On the other hand, multiplying equation (\ref{11}) by $u'_0$ and setting $A=0$, we have
\begin{equation*}
\begin{split}
   &\frac{b}{1-b}\int_{\partial\Omega}(u'_0)^2\mathrm{d}S_x+\int_{\Omega}\nabla_x u'_0\cdot \Big[ a(x)\nabla_x u'_0\Big]\mathrm{d}x-\int_{\Omega}u_0\mathbf{V}\cdot\nabla_x u'_0\mathrm{d}x+\int_{\Omega}c(x)(u'_0)^{2}\mathrm{d}x
   \\
   &=\lambda_1(0)\int_{\Omega}(u'_0)^{2}\mathrm{d}x,
\end{split}
\end{equation*}
which in turn implies that
\begin{equation}\label{18}
\begin{split}
    \frac{1}{2}\frac{\partial^{2}\lambda_1}{\partial A^{2}}(0)
    =&\frac{b}{1-b}\int_{\partial\Omega}(u'_0)^2\mathrm{d}S_x+\int_{\Omega}\nabla_x u'_0\cdot \Big[ a(x)\nabla_x u'_0\Big]\mathrm{d}x+\int_{\Omega}c(x)(u'_0)^{2}\mathrm{d}x
    \\
    &-\lambda_1(0)\int_{\Omega}(u'_0)^{2}\mathrm{d}x.
\end{split}
\end{equation}

We are now in a position  to prove Theorem \ref{th3}. According to the above Claim, it suffices
to prove that $\lambda_1(A)>\lambda_1(0)$ for every $A>0$. If $\lambda_1(\hat{A})=\lambda_1(0)$ for
some  $\hat{A}>0$, since $\frac{\partial\lambda_1}{\partial A}(A)\ge 0$,
 $\lambda_1(A)\equiv\lambda_1(0)$ for $A\in [0, \hat{A}]$.
  Thus $\frac{\partial^{2}\lambda_1}{\partial A^{2}}(0)=0$. By  (\ref{18}) we have
 $$\lambda_1(0)=\frac{ \frac{b}{1-b}\int_{\partial\Omega}(u'_0)^2\mathrm{d}S_x+\int_{\Omega}\nabla_x u'_0\cdot [ a(x)\nabla_x u'_0]\mathrm{d}x+\int_{\Omega}c(x)(u'_0)^{2}\mathrm{d}x}{\int_{\Omega}(u'_0)^{2}\mathrm{d}x},$$
 so the variational argument of principal eigenvalue $\lambda_1(0)$ implies that $ u'_0=cu_0$ for some constant $c$. Setting $A=0$ and then substituting equality $ u'_0=cu_0$ into equation (\ref{11}), we can conclude that $\mathbf{V}\cdot\nabla u_0\equiv0$ in $\Omega$,
  which is a contradiction. %and then $u_0\in\mathcal{I}_{b}$.
  This completes the proof. \qed\\

We now proceed to prove Theorem \ref{Rth2}. \\

\noindent{}$\mathrm{\mathbf{Proof~of ~Theorem~\ref{Rth2}.}}$ It suffices to establish the following result: \\

\noindent{}$\mathrm{\mathbf{Claim~1.}}$ Assume that $\mathcal{I}_{b}\neq\varnothing$. Then $\lambda_1(A)$ is uniformly bounded and
\begin{equation*}
 \begin{split}
 \lambda_1(A)\leq\inf_{\omega\in\mathcal{I}_{b}}\frac{\frac{b}{1-b}\int_{\partial\Omega}\omega^2\mathrm{d}S_x+\int_{\Omega}\nabla\omega\cdot[a(x)\nabla \omega]\mathrm{d}x+\int_{\Omega}c(x)\omega^2\mathrm{d}x}{\int_{\Omega}\omega^2\mathrm{d}x},~~\quad \forall A\geq0.
  \end{split}
 \end{equation*}

 The idea of the proof for Claim 1 comes from Theorem 2.2 in \cite{nk2} and we shall sketch the proof for
 the sake of completeness. Note that $u_A>0$ in $\bar{\Omega}$ by Hopf Boundary Lemma for case of $0\leq b<1$. Choose any function $\omega\in\mathcal{I}_{b}$ and multiply the equation of $u_A$ %(\ref{R1})
  by $\frac{\omega^2}{u_A}$, then integration  by parts implies that
 \begin{equation}\label{Re3}
  \begin{split}
    &\frac{b}{1-b}\int_{\partial\Omega}\omega^2\mathrm{d}S_x+\int_{\Omega}\nabla\left(\frac{\omega^2}{u_A}\right)\cdot\Big[a(x)\nabla u_A\Big]\mathrm{d}x+A\int_{\Omega}\omega^2\mathbf{V}\cdot\nabla \log u_A\mathrm{d}x+\int_{\Omega}c\omega^2\mathrm{d}x\\
    &=\lambda_1(A)\int_{\Omega}\omega^2\mathrm{d}x.
    \end{split}
\end{equation}
An interesting observation, in analogy with the proof of Theorem 2.2 in \cite{nk2}, gives that
 \begin{equation*}
   \int_{\Omega}\omega^2\mathbf{V}\cdot\nabla \log u_A\mathrm{d}x=0~\mathrm{and }~ \int_{\Omega}\nabla\left(\frac{\omega^2}{u_A}\right)\cdot\Big[a(x)\nabla u_A\Big]\mathrm{d}x\leq \int_{\Omega}\nabla\omega\cdot[a(x)\nabla \omega]\mathrm{d}x,
\end{equation*}
 which leads to Claim 1 by combining equality (\ref{Re3}) and  $\mathcal{I}_{b}\neq\varnothing$. %the fact that the set $ \mathcal{I}_{b}$ is  non-empty.

It turns out that $\mathcal{I}_{b}\neq\varnothing$ always holds for $0\leq b<1$, since it at least follows that $c\in\mathcal{I}_{b}$ for any constant $c$. Together with  Claim 1,  the monotonicity of $\lambda_1(A)$ in Theorem \ref{th3} readily implies that the limit of $\lim_{A\to\infty} \lambda_1(A)$ always exists and is finite. The proof of Theorem \ref{Rth2} is complete.  \qed\\

\begin{remark}\label{R2}(Necessity of the assumption $\mathbf{V}\cdot \mathbf{n}=0~\mathrm{on}~\partial\Omega$): We now remark that the additional assumption $\mathbf{V}\cdot \mathbf{n}=0~\mathrm{on}~\partial\Omega$ is necessary for $0\leq b<1$, while not necessary for $b=1$, corresponding to zero Dirichlet boundary condition.
\begin{itemize}
  \item  For $b=1$,  zero Dirichlet boundary condition implies $u_A=v_A=0$ on $\partial\Omega$ and the adjoint operator of $L_A$ can be written as $L^{*}_A=-\mathrm{div}(a(x)\nabla )-A\mathbf{V}\cdot\nabla +c(x)$ without the additional assumption, whence Theorem \ref{th3} remains true as the  properties of $J_A$ in Section \ref{S2} hold without this assumption as stated in subsection \ref{sub2}.
  \item For $0\leq b<1$, Theorem \ref{th3} may fail without the assumption $\mathbf{V}\cdot\mathbf{n}=0~\mathrm{on}~\partial\Omega$. Consider the same example as in Remark 2.5 of \cite{nk2},
\begin{equation*}
 \begin{cases}
 \begin{split}
 &-\varphi''_{A}+A\varphi'_{A}+c(x)\varphi_{A}=\lambda_1(A)\varphi_{A},~~0<x<1,\\
  &\varphi'_{A}(0)=\varphi'_{A}(1)=0.
  \end{split}
 \end{cases}
 \end{equation*}
Here we consider the special case where $b=0$ and the incompressible flow $\mathbf{V}=1$ does not satisfy the assumption $\mathbf{V}\cdot\mathbf{n}=0$ at 0 and 1. Chen and Lou's result in \cite{nk3} implies $\lim_{A\rightarrow+\infty}\lambda_1(A)=c(0)$ by treating $\mathbf{V}=-\nabla(-x)$. Assume further that $c'(x)\geq0$ and $c(x)\not\equiv\mathrm{constant}$. If Theorem \ref{th3} holds, since $\lambda_1(0)\geq\min_{x\in[0,1]}c(x)=c(0)$, we have $\lambda_1(A)\equiv c(0)$,  and thus $\varphi'_0=0$ according to part (ii) in  Theorem \ref{th3}, which contradicts to $c(x)\not\equiv\mathrm{constant}$.
\end{itemize}
\end{remark}

%\begin{remark}\label{R1}(Case of $c(x)\equiv\mathrm{constant}$): When we turn to the special case where $c(x)\equiv\mathrm{constant}$, different consequences will happen for   Neumann ($b=0$) and Dirichlet ($b=1$) boundary conditions.
%\begin{itemize}
%  \item  For Neumann's case, $u_0=\mathrm{constant}$ and thus $ \mathbf{V}\cdot\nabla u_0\equiv0$ in $\bar{\Omega}$. Then Theorem \ref{th3} implies that $\lambda_1(A)\equiv\lambda_1(0)$ in this case.
%  \item For Dirichlet's case, $\lambda_1(A)\equiv\lambda_1(0)$ is not always  true, but could be
%  true sometimes. Consider $N=2$ for instance. Choosing $\mathbf{V}(x_1,x_2)=\left(\tfrac{\partial u_0}{\partial x_2},-\tfrac{\partial u_0}{\partial x_1}\right)$, it is easy to verify that $\mathrm{div}\mathbf{V}=0$ in $\Omega$ and $ \mathbf{V}\cdot\nabla u_0\equiv0$ in $\bar{\Omega}$, so $\lambda_1(A)\equiv\lambda_1(0)$ holds in this example.
%\end{itemize}
%\end{remark}

\section{\bf Min-Max characterization of principal eigenvalue}\label{S4}
%Followed by functional $J$,
In this section  we focus on a new min-max characterization of the principal eigenvalue for elliptic operator
$L=-\mathrm{div}(a(x)\nabla)+\mathbf{V}\cdot\nabla +c(x)$ with incompressible flow and
general  boundary conditions.
To state our main result, some preparations are needed. In this connection, in view of the classical min-max characterization of principal eigenvalue \cite{nk21}
$$\lambda_1=\sup_{\omega\in\mathbb{S}_b}\inf_{x\in\Omega}\left[\frac{L\omega(x)}{\omega(x)}\right]=\inf_{\omega\in\mathbb{S}_b}\sup_{x\in\Omega}\left[\frac{L\omega(x)}{\omega(x)}\right]$$
together with the facts
\begin{equation*}
    \inf_{p\in \mathbb{S}_b,\int_{\Omega}p^2=1}\int_{\Omega}p^2(x)\left(\frac{L\omega}{\omega}\right)
    \mathrm{d}x=\inf_{x\in\Omega}\left[\frac{L\omega(x)}{\omega(x)}\right],\\
\end{equation*}
and
\begin{equation*}
   \sup_{p\in \mathbb{S}_b,\int_{\Omega}p^2=1}\int_{\Omega}p^2(x)\left(\frac{L\omega}{\omega}\right)\mathrm{d}x=\sup_{x\in\Omega}\left[\frac{L\omega(x)}{\omega(x)}\right],\\
\end{equation*}
it is straightforward to derive the following min-max characterization  of $\lambda_1$:
\begin{equation}\label{14}
\begin{split}
  \lambda_1&=\sup_{\omega\in\mathbb{S}_b}\inf_{p\in \mathbb{S}_b,\int_{\Omega}p^2=1}\int_{\Omega}p^2(x)\left(\frac{L\omega}{\omega}\right)\mathrm{d}x
  \\
  &=\inf_{\omega\in\mathbb{S}_b}\sup_{p\in \mathbb{S}_b,\int_{\Omega}p^2=1}\int_{\Omega}p^2(x)\left(\frac{L\omega}{\omega}\right)\mathrm{d}x.
\end{split}
\end{equation}

%As already mentioned in Section \ref{S1},
 However, the min-max characterization in Theorem \ref{th4} is somewhat different.
 %We next try to establish the proof according to the properties of functionals $J$. Until then,
The following result is the key of the proof of  Theorem \ref{th4}:

\begin{lemma}\label{L3} $$\sup_{\omega\in\mathbb{S}_b}J(\omega)=J(u)=\lambda_1.$$
Furthermore, if $J(\omega_0)=\sup_{\omega\in\mathbb{S}_b}J(\omega)$ for some $\omega_0\in\mathbb{S}_b$, then  $\omega_0=cu$ for some constant $c>0$.
\end{lemma}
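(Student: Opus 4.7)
The proof is essentially a direct corollary of Lemma \ref{L2} together with the normalization conventions already in place, so the plan is short.

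First I would verify the identity $J(u) = \lambda_1$. Since $Lu = \lambda_1 u$, the definition of $J$ gives
\[
J(u) = \int_\Omega uv\left(\frac{Lu}{u}\right)\mathrm{d}x = \lambda_1 \int_\Omega uv\,\mathrm{d}x = \lambda_1,
\]
using the normalization $\int_\Omega uv\,\mathrm{d}x = 1$ fixed in Section \ref{sub1} (and carried over to the Dirichlet case in Section \ref{sub2}).

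Next, for an arbitrary $\omega \in \mathbb{S}_b$, Lemma \ref{L2} gives
\[
J(u) - J(\omega) = \int_\Omega uv\left\{\left[\nabla\log(\omega/u)\right]\cdot\left[a(x)\nabla\log(\omega/u)\right]\right\}\mathrm{d}x.
\]
Since $u,v > 0$ in $\Omega$ and $a(x)$ is uniformly positive definite, the integrand is pointwise nonnegative. Hence $J(\omega) \le J(u) = \lambda_1$ for every $\omega \in \mathbb{S}_b$, proving $\sup_{\omega \in \mathbb{S}_b} J(\omega) = J(u) = \lambda_1$.

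Finally, for the rigidity statement, suppose $J(\omega_0) = \lambda_1$ for some $\omega_0 \in \mathbb{S}_b$. Then the identity above forces
\[
\int_\Omega uv\left\{\left[\nabla\log(\omega_0/u)\right]\cdot\left[a(x)\nabla\log(\omega_0/u)\right]\right\}\mathrm{d}x = 0.
\]
By the uniform ellipticity $\xi^{\mathrm T} a(x)\xi \ge \gamma_1|\xi|^2$ and positivity of $uv$, this implies $\nabla\log(\omega_0/u) \equiv 0$ in $\Omega$, so $\omega_0/u$ is a positive constant, i.e., $\omega_0 = cu$ for some $c > 0$. There is no real obstacle here; the only thing to double-check is that the normalization used to compute $J(u) = \lambda_1$ matches the one in the statement (it does, both for $0 \le b < 1$ and for $b=1$, per Sections \ref{sub1}--\ref{sub2}).
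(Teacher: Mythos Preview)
Your proof is correct and follows exactly the approach indicated in the paper, which simply states that Lemma~\ref{L3} is a direct consequence of Lemma~\ref{L2} together with the positive definiteness of $a(x)$. Your write-up merely makes the steps explicit (the identity $J(u)=\lambda_1$ from the normalization, the nonnegativity of the integrand, and the rigidity via uniform ellipticity), all of which match the paper's reasoning.
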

Lemma \ref{L3} is a direct consequence of Lemma \ref{L2} by recalling the  positive definiteness of $a(x)$.
With the help of Lemma \ref{L3}, Theorem \ref{th4} can be proved in  straightforward manner
as follows.\\

\noindent{}$\mathrm{\mathbf{Proof~of ~Theorem~\ref{th4}.}}$
We first  choose $p^2=u v $ and apply Lemma \ref{L3} to obtain  that
$$\lambda _1=\sup_{\omega\in\mathbb{S}_b}\int_{\Omega}u  v \left(\frac{L\omega}{\omega}\right)\mathrm{d}x\geq \inf_{p\in \mathbb{S}_b,\int_{\Omega}p^2=1}\sup_{\omega\in\mathbb{S}_b}\int_{\Omega}p^2(x)\left(\frac{L\omega}{\omega}\right)\mathrm{d}x.$$
On the other hand, for any $p\in \mathbb{S}_b$ satisfying $\int_{\Omega}p^2=1$, it is easy to see that
$$\lambda _1=\int_{\Omega}p^2(x)\left(\frac{L u }{u }\right)\mathrm{d}x\leq\sup_{\omega\in\mathbb{S}_b}\int_{\Omega}p^2(x)\left(\frac{L\omega}{\omega}\right)\mathrm{d}x,$$
which implies that
$$\lambda _1
\leq
\inf_{p\in \mathbb{S}_b,\int_{\Omega}p^2=1}\sup_{\omega\in\mathbb{S}_b}\int_{\Omega}p^2(x)\left(\frac{L\omega}{\omega}\right)\mathrm{d}x.
$$
Hence  equality (\ref{Liu14}) holds. %Proceeding as in the argument before, equality (\ref{Liu15}) also holds naturally.
The proof of Theorem \ref{th4} is now complete. \qed

\begin{remark}\label{R3}(Reduce to the classical Rayleigh-Ritz formula): %We finally remark that
The classical Rayleigh-Ritz formula is actually implicity contained in the min-max formula in Theorem \ref{th4} if $L$ is self-adjoint, i.e.,  $\mathbf{V}=0$. It can be deduced from an important result in \cite{nk25}. More specifically, viewing $\mu=p^2\mathrm{d}x$ as a positive measure satisfying the mild assumption $\mu\ll\lambda$ for the Borel measure $\lambda$ and noting that $\frac{\mathrm{d}\mu}{\mathrm{d}\lambda}=p^2$, Theorem 4 in \cite{nk25} leads to
$$\sup_{\omega\in\mathbb{S}_b}\int_{\Omega}p^2(x)\left(\frac{L\omega}{\omega}\right)\mathrm{d}x= \langle Lp, p \rangle,$$
which reduces the formula in Theorem \ref{th4} to the classical Rayleigh-Ritz formula.
\end{remark}

\section{\bf Discussions and open questions}\label{S5}

%Before going further on, in order to put our result into applied and  historical context, we give a brief review of the relevant literature.

In many physical and biological systems, the effect of  incompressible flow $\mathbf{V}$ on  the
speed of traveling fronts  of equation (\ref{Intro1}) remains an important area of active research \cite{nk27, nk15, nk26, NX2003, NX2005, NX2007, NRX2015, nk14},  with particular interest on  the minimal speed $c^{*}_\mathbf{V}$.
  %due to its relation to spreading of general compactly supported initial data.
  %Complementing the preceding discussion we observe that
  The minimal speed $c^{*}_\mathbf{V}$ can be enhanced by the introduction of incompressible flows  %null-average media \cite{k1}, periodic symmetric media \cite{k0,nk7,nk13} and  Lipschitz continuous media  \cite{nk12},
  \cite{nk12,k1,nk7,nk13, k0}, while  general compressible flows may decrease $c^{*}_\mathbf{V}$; See Theorem 2.8 of \cite{nk16}. In this connection, many works focus on the case of the shear flow $\mathbf{V}=\alpha(x_2,\ldots,x_N)\mathbf{e}$, where $\alpha\not\equiv0$ is zero-average, in a straight cylinder $\Omega=\mathbb{R}\times D$ with  bounded domain $D\subset\mathbb{R}^{N-1}$ along the direction $\mathbf{e}$. Examples are known for which  the minimal speed $c^{*}_{A\mathbf{V}}$,
  in the presence of a shear flow  $\mathbf{V}$, is asymptotically linear in $A$ \cite{nk26}.
  Furthermore, $c^{*}_{A\mathbf{V}}$ is increasing in $A$, ${c^{*}_{A\mathbf{V}}}/{A}$  is decreasing in $A$,
   as well as ${c^{*}_{A\mathbf{V}}}/{A}\rightarrow\rho>0$ as $A\rightarrow+\infty$  \cite{nk27, nk16}. The monotonicity of $c^{*}_{A\mathbf{V}}$ and ${c^{*}_{A\mathbf{V}}}/{A}$ however remains open for  general incompressible flow $\mathbf{V}$; See  Remark 1.9 in \cite{nk12} and Remark 1.6 in \cite{nk26} for details.
  Our preliminary studies suggest that the  monotonicity of ${c^{*}_{A\mathbf{V}}}/{A}$ holds
  for general incompressible flow $\mathbf{V}$.
We hope to report it in forthcoming work.

 We now turn to consider operator $L_A$ with gradient  flow $\mathbf{V}_1=\nabla m$ for some $m\in C^2(\bar{\Omega})$,
 where the principal eigenvalue $\lambda_1(A)$, in analogy with equation (1.2) in \cite{nk2}, can be written as
\begin{equation*}
 \begin{split}
 \lambda_1(A)=\inf_{\omega\in H^1(\Omega)\setminus\{0\}}
\frac{\frac{b}{1-b}\int_{\partial\Omega}\omega^2\mathrm{d}S_x+\int_{\Omega}\nabla \omega\cdot[a(x)\nabla \omega]\mathrm{d}x+\int_{\Omega}\left(\frac{A^2}{4}|\mathbf{V}_1|^2-\frac{A}{2}\mathrm{div}\mathbf{V}_1+c(x)\right)\omega^2\mathrm{d}x}
%+\int_{\Omega}c(x)\omega^2\mathrm{d}x}
{\int_{\Omega}\omega^2\mathrm{d}x},
  \end{split}
 \end{equation*}
which implies the monotonicity of  $\lambda_1(A)$ if $\mathbf{V}_1$ is incompressible satisfying $\mathrm{div}\mathbf{V}_1=0$. This result can be covered by Theorem \ref{th3} with the extra assumption $\mathbf{V}_1\cdot\mathbf{n}=0$ on $\partial\Omega$.
However, if the gradient flow $\mathbf{V}_1=\nabla m$ is incompressible and satisfies $\mathbf{V}_1\cdot\mathbf{n}=0$ on $\partial\Omega$, the only possibility is $m=\mathrm{constant}$. Hence we may ask naturally:
When does the monotonicity  property remain true for gradient flow?
 Understanding the monotonicity of $\lambda_1(A)$ with general flows seems to be more difficult.

 Another open question is to determine the limit value of  $\lambda_1(A)$ for incompressible flow $\mathbf{V}$ with
  Robin boundary conditions as $A\rightarrow+\infty$, though the existence of the limit has been shown in Theorem \ref{Rth2}. The results for  Dirichlet and Neumann boundary conditions in \cite{nk2}
   show that the limit of $\lambda_1(A)$ can be  determined by the variational principle (\ref{Liu4}).
   %Asymptotically, along this line and regarding the upper boundedness provided in
    In view of Theorem \ref{Rth2}, it seems plausible to conjecture that for $0\leq b<1$,
\begin{equation*}
 \begin{split}
 \lim_{A\rightarrow+\infty}\lambda_1(A)=\inf_{\omega\in\mathcal{I}_{b}}\frac{\frac{b}{1-b}\int_{\partial\Omega}\omega^2\mathrm{d}S_x+\int_{\Omega}\nabla\omega\cdot[a(x)\nabla \omega]\mathrm{d}x+\int_{\Omega}c(x)\omega^2\mathrm{d}x}{\int_{\Omega}\omega^2\mathrm{d}x},
  \end{split}
 \end{equation*}
which would reduce to the results in \cite{nk2} for the case $b=0$.  %Besides the incompressible flow $\mathbf{V}$,
The  limit value of  $\lambda_1(A)$ with the gradient flow $\mathbf{V}_1=\nabla m$ has been established by  Chen and Lou \cite{nk3} for Neumann boundary conditions, which can be stated as
$$\lim_{A\rightarrow+\infty}\lambda_1(A)=\min_{\mathcal{M}}c,$$
with the set $\mathcal{M}$  consisting of all points of local maximum of $m$. Hence a natural question arises: Does the limit of  $\lambda_1(A)$ exist as $A\rightarrow+\infty$ for general flows under proper boundary conditions? If it exists, what is the limit value?

There are a substantial body of literatures concerning the asymptotic behavior of the principal
eigenvalue of elliptic operators for small diffusion rates; See
\cite{chenlou2012, DEF1, DEF2, Fri1, Wen1}. For the principal eigenvalue of operator $L_{D}=-D\Delta+\mathbf{V}\cdot\nabla +c(x)$,
Chen and Lou \cite{chenlou2012} investigated its asymptotic behavior as $D\to 0$ when $\mathbf{V}$
is a gradient flow. Much less seems to be known
when $\mathbf{V}$ is a general incompressible flow; See \cite{BZ2017, Vu1}.

\bigskip

\bigskip

\noindent{\bf Acknowledgments.} SL was partially supported by the NSFC grant No. 11571364.
YL  was partially supported by the NSF grant DMS-1411176.

%\medskip
% The data information below will be filled by AIMS editorial staff
%Received xxxx 20xx; revised xxxx 20xx.
%\medskip

\end{document}